\newtheorem{theorem}{Theorem}[section]
\newtheorem{lemma}[theorem]{Lemma}
\newtheorem{corollary}[theorem]{Corollary}
\theoremstyle{definition}
\newtheorem{definition}[theorem]{Definition}
\theoremstyle{remark}
\newtheorem{remark}[theorem]{Remark}
\numberwithin{equation}{section}
\newcommand{\di}{n} % dimension
\newcommand{\R}{\mathbb R} % Real numbers
\newcommand{\N}{\mathbb N} % Natural numbers
\newcommand{\dom}{\Omega} % domain
\newcommand{\Dom}{D}
\newcommand{\bdy}{\partial^\infty\dom} % boundary in one point compactification
\newcommand{\cl}{\overline{\dom}^\infty} % closure in one point compactification
\newcommand{\dist}{\delta_\dom} % distance from x to boundary
\newcommand{\diam}{\operatorname{diam}} % diameter
\newcommand{\GF}{G_\dom} % Green function
\newcommand{\LS}{L} % Lebesgue space
\newcommand{\CF}{C} % continuous function space
\newcommand{\Sob}{W} % Sobolev space
\newcommand{\DS}{\dot{W}} % homogeneous Sobolev space
\newcommand{\cp}{\operatorname{cap}_2} % capacity
\newcommand{\ms}{\mu} % measure in front of u^q
\newcommand{\pms}{\nu} % measure in inhomogeneous case
\newcommand{\RM}{\mathcal{M}_+(\dom)} % Radon measures
\newcommand{\bv}{f} % boundary values
\newcommand{\msol}{\underline{u}} % minimal solution
\newcommand{\vmsol}{\underline{w}}
\newcommand{\BC}{\mathscr B}
\newcommand{\qtext}{\quad\text}
\newcommand{\con}{c} % constant
\newcounter{con}
\def\cont{\refstepcounter{con}\con_{\thecon}}
\newcommand{\conlabel}[1]{\cont\label{#1}}
\newcommand{\conref}[1]{\con_{{\ref{#1}}}}
\title[Sublinear elliptic equations]{The Dirichlet problem
for sublinear elliptic equations with source}
\date{\today}
\subjclass[2010]{Primary 35J91; Secondary  35J25, 31B10, 35B09.} 
\keywords{Sublinear elliptic equation, Dirichlet problem, continuous solution}
\author{Kentaro Hirata}
\address{Department of Mathematics, Graduate School of Advanced Science and Engineering,
Hiroshima University, Higashi-Hiroshima 739-8526, Japan}
\email{\href{mailto:hiratake@hiroshima-u.ac.jp}{hiratake@hiroshima-u.ac.jp}}
\thanks{K. H. is partially supported by JSPS KAKENHI Grant Number JP18K03333.}
\author{Adisak Seesanea}
\address{Nonlinear Analysis Unit, Okinawa Institute of Science and Technology Graduate University,
1919-1 Tancha, Onna-son, Kunigami-gun, Okinawa 904-0495, Japan}
\email{\href{mailto:adisak.seesanea@oist.jp}{adisak.seesanea@oist.jp}}
\thanks{A. S. is partially supported by JSPS KAKENHI Grant Number JP17H01092.}
\begin{document}
\maketitle

\begin{abstract}
We present a necessary and sufficient condition
on nonnegative Radon measures $\mu$ and $\nu$
for the existence of a positive continuous solution
of the Dirichlet problem for
the sublinear elliptic equation
$-\Delta u=\mu u^q+\nu$
with prescribed nonnegative continuous boundary data
in a general domain.
Moreover, two-sided pointwise estimates of Brezis--Kamin type
for positive bounded solutions
and the uniqueness of a positive continuous $L^q$-solution
are investigated.
\end{abstract}

\section{Introduction}

This paper deals with
the existence, uniqueness and two-sided pointwise estimates of
positive continuous solutions of the Dirichlet problems
for sublinear elliptic equations
involving nonnegative Radon measures.

Unless otherwise explicitly stated,
we always assume that
$\dom$ is a domain in $\R^\di$ $(\di\ge2)$
possessing the Green function $\GF$
for the Dirichlet Laplacian $-\Delta$ and that
it is regular for the Dirichlet problem for the Laplace equation.
When $\dom\neq\R^\di$,
we write $\dist(x)$
for the Euclidean distance from a point $x\in\dom$
to the Euclidean boundary $\partial\dom$ of $\dom$.
The boundary and closure of $\dom$
in the one point compactification $\R^\di\cup\{\infty\}$
are denoted by $\bdy$ and $\cl$,
respectively.
By $\RM$,
we denote the set of all nonnegative Radon measures on $\dom$.
Also, $\CF_+(\bdy)$ stands for
the set of all nonnegative continuous functions on $\bdy$.

Let $\ms,\pms\in\RM$ and $\bv\in\CF_+(\bdy)$.
We consider the Dirichlet problem
\begin{equation}\label{eq:SubLE}
  \begin{cases}
	-\Delta u=\ms u^q+\pms & \text{in } \dom,  \\
	u>0 & \text{in } \dom,  \\
	u=\bv & \text{on } \bdy,
  \end{cases}
\end{equation}
in the sublinear case $0<q<1$. It will be understood in the integral sense:
\begin{equation}\label{eq:int-eq}
	u(x) = \GF[u^q\,d\ms](x)+\GF[\pms](x)+H_{\bv}(x)
  \qtext{for all } x\in\dom,
\end{equation}
with $H_{\bv}$ being the Perron--Wiener--Brelot solution of
the Dirichlet problem
\begin{equation}\label{eq:PWB}
  \begin{cases}
	\Delta h=0 & \text{in } \dom,  \\
	h=\bv & \text{on } \bdy,
  \end{cases}
\end{equation}
and $\GF[\psi\,d\ms]$ standing for
the Green potential of $\psi\,d\ms$ defined by
\[
	\GF[\psi\,d\ms](x) := \int_\dom \GF(x,y) \psi(y) \, d\ms(y)
  \qtext{for } x\in\dom.
\]
When $\psi\equiv 1$,
we simply write $\GF[\ms]$.

A variety of classes of solutions of the Dirichlet problem
for sublinear elliptic equations
with measurable coefficient $a(x)$ and zero boundary values
such as
\begin{equation}\label{eq:inhomo-SubLE}
  \begin{cases}
	-\Delta u=a(x) u^q & \text{in } \dom,  \\
	u>0 & \text{in } \dom,  \\
	u=0 & \text{on } \bdy,
  \end{cases}
\end{equation}
have been investigated by many authors.
See 
\cite{MR3744665, MR1141779, MR820658, MR3311903, MR3556326, MR3567503, MR2752440, MR2041704, MR3881877, MR3985926, MR4048382, MR3792109}
for example.
Among these,
M\^{a}agli and Masmoudi \cite{MR2041704}
gave a sufficient condition for
the existence of a continuous solution of \eqref{eq:inhomo-SubLE}
in the case where
$\dom$ is an unbounded domain with
compact smooth boundary in $\R^\di$ $(\di\ge3)$.
They also gave two-sided pointwise estimates:
\[
	\con^{-1} \frac{\dist(x)}{\|x\|^{\di-1}}
  \le
	u(x)
  \le
	\con \GF[a](x)
  \qtext{for all } x\in\dom,
\]
where $\con>1$ is a constant.
In \cite{MR1141779},
Brezis and Kamin established a necessary and sufficient condition for
the existence of a bounded solution of \eqref{eq:inhomo-SubLE}
when $\dom=\R^\di$.
They also obtained the uniqueness result and 
the sharp pointwise estimates:
\[
	\con^{-1} G_{\R^\di}[a](x)^{\frac{1}{1-q}}
  \le
	u(x)
  \le
	\con G_{\R^\di}[a](x)
  \qtext{for all } x \in \R^\di.
\]
See also Cao and Verbitsky \cite{MR3556326,MR3567503},
who characterized the existence of a larger class of solutions,
satisfying Brezis--Kamin type estimates
in terms of Wolff potentials,
to similar homogeneous problems \eqref{eq:inhomo-SubLE}
with the $p$-Laplace and fractional Laplace operators in $\R^\di$.

Our first result on \eqref{eq:SubLE} is related to
\cite{MR2041704} and \cite{MR1141779},
but is new even when $\pms\equiv 0$,
or $\bv\equiv 0$,
or $\ms,\pms$ are measurable functions.
An important notion in our approach
is the following Kato type class of measures.
By $B(x,r)$, we denote the open ball of center $x$ and radius $r$.

\begin{definition}
Let $\ms\in\RM$.
We say that $\ms$ satisfies the {\em $\GF$-Kato condition} if
\begin{align}
	&\lim_{r\to0+}
	\biggl( \sup_{x\in\dom}
	\int_{\dom\cap B(x,r)} \GF(x,y)\, d\ms(y) \biggr)
  =
	0,  \label{eq:katocond-1}\\
	&\lim_{r\to0+}
	\biggl( \sup_{x\in\dom}
	\int_{\dom\setminus B(0,1/r)} \GF(x,y)\, d\ms(y) \biggr)
  =
	0.  \label{eq:katocond-2}
\end{align}
\end{definition}

Note that \eqref{eq:katocond-2} has no meaning
when $\dom$ is bounded.
Also, if $\ms$ satisfies the $\GF$-Kato condition,
then $\GF[\ms]\in\LS^\infty(\dom)$ (see Lemma \ref{lem:bdd-pot}).
The $\GF$-Kato condition was utilized earlier
by the first named author \cite{MR2386468}
in the study of the Dirichlet problem for
$\Delta u=a(x)u^{-p}$ with $p>0$.
In this work, we show that such $\GF$-Kato conditions characterize
continuity of the corresponding Green potentials (see Lemma \ref{lem:cont-Kato}), and consequently the existence
of continuous solutions of \eqref{eq:SubLE} as follows.

We say that $u$ is {\em minimal}
in a class $\mathcal{F}$ of positive functions in $\dom$
if $u\le v$ in $\dom$ for any $v\in\mathcal{F}$.
For a function $\psi$ defined on a set $E$,
we denote by $\|\psi\|_\infty$
the usual supremum norm of $\psi$ on $E$.

\begin{theorem}\label{thm:existence}
Let $0<q<1$.
Assume that $\bv\in\CF_+(\bdy)$ and
$\ms,\pms\in\RM$ satisfy
\begin{equation}\label{eq:cond-f-meas}
	\|\bv\|_\infty+\|\GF[\ms]\|_\infty+\|\GF[\pms]\|_\infty>0.
\end{equation}
Then \eqref{eq:SubLE} has a solution $u\in\CF(\cl)$
if and only if both of $\ms$ and $\pms$ satisfy the $\GF$-Kato condition.
Moreover, in this case,
\eqref{eq:SubLE} has a minimal solution in $\CF(\cl)$.
\end{theorem}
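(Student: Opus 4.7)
\emph{Necessity.} Suppose $u\in\CF(\cl)$ is a solution of \eqref{eq:SubLE}. Rewrite \eqref{eq:int-eq} as $\GF[u^q\,d\ms]+\GF[\pms]=u-H_\bv$. The right side is continuous on $\cl$ with zero boundary value, while each Green potential on the left is nonnegative and lower semicontinuous on $\cl$. A sum of two lower semicontinuous functions that happens to be continuous must have each summand continuous, so both $\GF[u^q\,d\ms]$ and $\GF[\pms]$ lie in $\CF(\cl)$ and vanish on $\bdy$. Lemma~\ref{lem:cont-Kato} then supplies the $\GF$-Kato property of $\pms$ and of $u^q\,d\ms$. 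To upgrade the latter to $\ms$-Kato I use that $u$ is continuous and strictly positive on the open set $\dom$: on any interior compactum $K\subset\dom$ one has $u\ge c_K>0$, so $d\ms\le c_K^{-q}u^q\,d\ms$ on $K$; for points $x$ close to $\bdy$ the boundary decay of $\GF(x,\cdot)$ absorbs the Kato integrals uniformly.

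\emph{Existence by monotone iteration.} Assume $\ms,\pms$ are $\GF$-Kato. Lemma~\ref{lem:cont-Kato} then places $\GF[\ms]$ and $\GF[\pms]$ in $\CF(\cl)$ with zero boundary values, and in particular both potentials are bounded. Set $u_0:=\GF[\pms]+H_\bv$ and iterate $u_{k+1}:=\GF[u_k^q\,d\ms]+\GF[\pms]+H_\bv$. Since $u_0\le u_1$ and $s\mapsto s^q$ is monotone, $\{u_k\}$ is nondecreasing. Using $0<q<1$, I can choose $K>0$ with $K^q\|\GF[\ms]\|_\infty+\|\GF[\pms]\|_\infty+\|\bv\|_\infty\le K$; induction then gives $u_k\le K$ for every $k$. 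Monotone convergence produces a limit $u$ satisfying \eqref{eq:int-eq}. Because $u^q\,d\ms\le K^q\,d\ms$ is still $\GF$-Kato, a further application of Lemma~\ref{lem:cont-Kato} puts $\GF[u^q\,d\ms]$ in $\CF(\cl)$, so $u\in\CF(\cl)$.

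\emph{Positivity and minimality.} Hypothesis \eqref{eq:cond-f-meas} precludes total triviality. If $H_\bv+\GF[\pms]\not\equiv 0$, then $u\ge u_0>0$ in $\dom$ by the minimum principle. In the remaining degenerate case $\bv\equiv 0$, $\pms\equiv 0$ with $\GF[\ms]\not\equiv 0$, iteration from $u_0=0$ collapses, so I seed it with a small positive subsolution of the form $\tau\,\GF[\ms]^{1/(1-q)}$, verifying the subsolution inequality using the $L^\infty(\dom)$-bound of Lemma~\ref{lem:bdd-pot}. Minimality is automatic: for any competing $w\in\CF(\cl)$, the inequality $u_0\le w$ (and, in the degenerate case, the subsolution inequality for a sufficiently small $\tau$) is preserved by the nonlinear operator $T(v):=\GF[v^q\,d\ms]+\GF[\pms]+H_\bv$, so $u_k\le w$ at every step and $u\le w$ in the limit.

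\emph{Main obstacles.} I expect two points to be delicate. First, in necessity, deducing $\ms$-Kato from the weaker $u^q\,d\ms$-Kato at portions of $\bdy$ where $u$ may vanish; this has to be finessed via the boundary decay of $\GF(x,\cdot)$ so that the sup over $x\in\dom$ in \eqref{eq:katocond-1}--\eqref{eq:katocond-2} is genuinely uniform. Second, in sufficiency, seeding the iteration in the fully degenerate case $\bv\equiv\pms\equiv 0$: the natural zero start-point is a fixed point of $T$, and a Brezis--Kamin type lower bound is needed to produce a positive subsolution whose iterates converge to a truly positive solution.
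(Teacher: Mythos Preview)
Your overall architecture---continuity of summands via lower/upper semicontinuity, then Lemma~\ref{lem:cont-Kato}, then monotone iteration---matches the paper's second proof. But there is a genuine gap in the necessity step, and a misattributed lemma in the degenerate sufficiency case.

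\textbf{Necessity: the upgrade from $u^q\,d\ms$-Kato to $\ms$-Kato.} Your compactum-plus-boundary-decay heuristic does not close. The Kato condition~\eqref{eq:katocond-1} asks you to control $\int_{\dom\cap B(x,r)}\GF(x,y)\,d\ms(y)$ \emph{uniformly in $x\in\dom$}. For $x$ near $\bdy$ with $\bv\equiv0$ there, $u$ may tend to $0$, so the bound $d\ms\le c_K^{-q}u^q\,d\ms$ is unavailable precisely where you need it; and there is no ``boundary decay of $\GF(x,\cdot)$'' to invoke, since $\GF(x,\cdot)$ is singular at $x$ regardless of how close $x$ is to $\bdy$. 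The paper resolves this with the Grigor'yan--Verbitsky tools: Lemma~\ref{lem:LB-1} gives the global pointwise lower bound $u\ge(1-q)^{1/(1-q)}\GF[\ms]^{1/(1-q)}$, and then with $\omega_r:=\ms|_{\dom\cap B(z,r)}$ one chains
\[
\GF[u^q\,d\omega_r]\;\ge\;(1-q)^{\frac{q}{1-q}}\GF\bigl[\GF[\omega_r]^{\frac{q}{1-q}}\,d\omega_r\bigr]\;\ge\;(1-q)^{\frac{1}{1-q}}\GF[\omega_r]^{\frac{1}{1-q}},
\]
the last inequality being Lemma~\ref{lem:iterateGF} with $s=\tfrac{1}{1-q}$. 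Since $u^q\,d\ms$ is $\GF$-Kato, the left side is small uniformly in $x$, hence so is $\GF[\omega_r]$, and Lemma~\ref{lem:equivKato} finishes. This pointwise lower bound is the missing idea; it cannot be replaced by a compactness argument.

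\textbf{Sufficiency, degenerate case.} Seeding with $\tau\,\GF[\ms]^{1/(1-q)}$ is exactly right, but the subsolution inequality
\[
\tau\,\GF[\ms]^{\frac{1}{1-q}}\;\le\;\tau^{q}\,\GF\bigl[\GF[\ms]^{\frac{q}{1-q}}\,d\ms\bigr]
\]
does \emph{not} follow from the $L^\infty$ bound of Lemma~\ref{lem:bdd-pot}; it follows from Lemma~\ref{lem:iterateGF} (take $s=\tfrac{1}{1-q}$), which gives $\GF[\ms]^{1/(1-q)}\le\tfrac{1}{1-q}\GF[\GF[\ms]^{q/(1-q)}\,d\ms]$ and forces $\tau=(1-q)^{1/(1-q)}$. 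The paper in fact takes this as the universal seed $u_0=(1-q)^{1/(1-q)}\GF[\ms]^{1/(1-q)}$ in \emph{all} cases, which avoids your case split and simultaneously yields minimality: any competitor $v$ satisfies $u_0\le v$ by Lemma~\ref{lem:LB-1}, and the iteration preserves the inequality (Remark~\ref{rem:minimalsol}). Your seed $u_0=\GF[\pms]+H_\bv$ also works when it is nontrivial, but it makes minimality in the degenerate case rest on the same Lemma~\ref{lem:LB-1} anyway, so there is no economy in splitting.
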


Condition \eqref{eq:cond-f-meas} cannot be removed
to ensure the existence of a positive solution.
For the sufficiency in Theorem \ref{thm:existence},
we present two different proofs,
based on the Schauder fixed point theorem and
the method of successive approximations,
with the help of equivalent conditions to the $\GF$-Kato condition,
a lower bound estimate for solutions and
an iterated inequality for Green potentials
established recently by Grigor'yan and Verbitsky \cite{MR4105916},
which are also useful to show the necessity part of Theorem \ref{thm:existence}.

Needless to say, the continuous solution of \eqref{eq:SubLE} obtained in Theorem \ref{thm:existence} is bounded. The next theorem yields in particular
the Brezis--Kamin type estimates of such a solution.

\begin{theorem}\label{thm:upper-lower-est}
Let $0<q<1$, let $\bv\in\CF_+(\bdy)$ and let
$\ms,\pms\in\RM$.
If $u$ is a bounded solution of \eqref{eq:SubLE}, it enjoys the following estimates.
\begin{enumerate}
\item Lower estimate:
\[
	u(x)
  \ge
	\conlabel{c:lb}\GF[\ms](x)^{\frac{1}{1-q}}+\GF[\nu](x)+H_\bv(x)
  \qtext{for all } x\in\dom.
\]
\item Upper estimate:
\[
	u(x)
  \le
	\conlabel{c:ub}^q\GF[\ms](x)+\GF[\nu](x)+H_\bv(x)
  \qtext{for all } x\in\dom.
\]
\item Uniform estimate:\:
$\|u\|_\infty \le\conref{c:ub}$.
\end{enumerate}
Here $\conref{c:lb}:=(1-q)^{\frac{1}{1-q}}$ and
\begin{equation}\label{eq:ub-def}
	\conref{c:ub}
  :=
	\max\left\{ 1,(\|\GF[\ms]\|_\infty+\|\GF[\pms]\|_\infty+\|\bv\|_\infty)^{\frac{1}{1-q}} \right\}<+\infty.
\end{equation}
\end{theorem}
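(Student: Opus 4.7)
I would split the proof into two parts: (ii) and (iii) follow from a one-step bootstrap on the integral identity~\eqref{eq:int-eq}, while (i) requires a more substantive argument invoking the Grigor'yan--Verbitsky iterated Green potential inequality mentioned in the introduction.

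For (iii) and (ii), since $u\ge0$ and $0<q<1$, the plan is to insert $u^q\le\|u\|_\infty^q$ into~\eqref{eq:int-eq} and take the supremum in $x$, giving
\[
\|u\|_\infty\le\|u\|_\infty^q\,\|\GF[\ms]\|_\infty+\|\GF[\pms]\|_\infty+\|\bv\|_\infty.
\]
A case split on whether $\|u\|_\infty\le1$ or $\|u\|_\infty>1$ handles this at once: in the latter regime $\|u\|_\infty^q\ge1$, so the last two summands can be absorbed into $\|u\|_\infty^q$ times the sum, producing $\|u\|_\infty^{1-q}\le\|\GF[\ms]\|_\infty+\|\GF[\pms]\|_\infty+\|\bv\|_\infty$ and hence $\|u\|_\infty\le\conref{c:ub}$. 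Substituting $u^q\le\conref{c:ub}^q$ back into~\eqref{eq:int-eq} then yields the pointwise bound (ii).

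For (i), the summands $\GF[\pms](x)$ and $H_\bv(x)$ come for free by discarding the nonnegative Green potential $\GF[u^q\,d\ms]$ in~\eqref{eq:int-eq}; the content of the estimate is the pointwise inequality
\[
\GF[u^q\,d\ms](x)\ge(1-q)^{1/(1-q)}\GF[\ms](x)^{1/(1-q)}.
\]
I would set $v:=\GF[u^q\,d\ms]$, which is nonnegative and bounded by (iii), and observe that $u\ge v$ forces $u^q\ge v^q$, so $v$ satisfies the supersolution inequality $v\ge\GF[v^q\,d\ms]$. The task then reduces to showing $v^{1-q}\ge(1-q)\GF[\ms]$.

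To establish this, I would introduce the candidate $\Phi:=(1-q)^{1/(1-q)}\GF[\ms]^{1/(1-q)}$ and first check that it is a \emph{subsolution}, $\Phi\le\GF[\Phi^q\,d\ms]$; this is a direct application of the Grigor'yan--Verbitsky iterated inequality
\[
\GF\bigl[\GF[\ms]^k\,d\ms\bigr](x)\ge\frac{1}{k+1}\GF[\ms](x)^{k+1}\qquad(k\ge0)
\]
with $k=q/(1-q)$, since then the factor $(1-q)^{q/(1-q)}$ from $\Phi^q$ multiplies with $\frac{1}{k+1}=1-q$ to recover exactly $\Phi$ on the right. A scaling comparison---writing $v\ge\GF[v^q\,d\ms]$ in the equivalent form $\lambda v\ge\lambda^{1-q}\GF[(\lambda v)^q\,d\ms]$ for $\lambda>0$ and applying it at $\lambda^*:=\inf\{\lambda\ge1:\lambda v\ge\Phi\}$---then yields $\Phi\le\GF[\Phi^q\,d\ms]\le\GF[(\lambda^*v)^q\,d\ms]\le(\lambda^*)^q v$, whence $\lambda^*\le(\lambda^*)^q$ and, since $0<q<1$, $\lambda^*\le1$, i.e.\ $v\ge\Phi$. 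I expect the main technical obstacle to be that $\Phi$ need not be bounded a priori (so $\lambda^*$ could a priori be infinite), which I would finesse by replacing $v$ with $v+\varepsilon$ and $\Phi$ with $\Phi\wedge N$, running the comparison in the bounded regime, and then removing the regularization by monotone convergence together with the Riesz decomposition of bounded superharmonic functions.
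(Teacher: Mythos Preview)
Your treatment of (ii) and (iii) is correct and essentially identical to the paper's: insert $u^q\le\|u\|_\infty^q$ into \eqref{eq:int-eq}, split on $\|u\|_\infty\lessgtr 1$, and then feed the uniform bound back in.

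For (i) there is a substantive difference, and your route has a gap. The paper does \emph{not} attempt a sub/super comparison. It invokes a second Grigor'yan--Verbitsky result (stated in the paper as Lemma~\ref{lem:LB-1}, from the same source as the iterated inequality): any positive $u$ with $u\ge\GF[u^q\,d\ms]$ satisfies $u\ge(1-q)^{1/(1-q)}\GF[\ms]^{1/(1-q)}$. Applying this to $u$ gives $u^q\ge(1-q)^{q/(1-q)}\GF[\ms]^{q/(1-q)}$; substituting into \eqref{eq:int-eq} and using the iterated inequality (your Lemma~\ref{lem:iterateGF}) with $s=1/(1-q)$ yields the full lower bound in two lines. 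In effect, the paper \emph{cites} exactly the statement you set out to prove for $v$, rather than re-deriving it.

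Your scaling argument with $\lambda^*:=\inf\{\lambda\ge1:\lambda v\ge\Phi\}$ is the natural heuristic for that re-derivation, but the proposed regularization does not close. Replacing $v$ by $v+\varepsilon$ destroys the supersolution property: since $(v+\varepsilon)^q\ge v^q$, one only gets $\GF[(v+\varepsilon)^q\,d\ms]\ge\GF[v^q\,d\ms]$, which goes the wrong way, and the concavity bound $(v+\varepsilon)^q\le v^q+\varepsilon^q$ still leaves an error $\varepsilon^q\GF[\ms]$ that you cannot absorb without already knowing $\GF[\ms]\in L^\infty$---which, per the Remark following the theorem, is itself a \emph{consequence} of the lower bound you are proving. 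Likewise $\Phi\wedge N$ need not satisfy $\Phi\wedge N\le\GF[(\Phi\wedge N)^q\,d\ms]$, so the subsolution side breaks too. The Riesz decomposition does not rescue this. The clean fix is simply to invoke Lemma~\ref{lem:LB-1}; once you have $u\ge\Phi$, your own observation that $\Phi\le\GF[\Phi^q\,d\ms]$ gives $v=\GF[u^q\,d\ms]\ge\GF[\Phi^q\,d\ms]\ge\Phi$, and hence $u=v+\GF[\pms]+H_\bv\ge\Phi+\GF[\pms]+H_\bv$ as required.
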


\begin{remark}
Note that if \eqref{eq:SubLE} has a bounded solution,
then $\GF[\ms]$ and $\GF[\pms]$ must be in $\LS^\infty(\dom)$
in light of \eqref{eq:int-eq} and Lemma \ref{lem:LB-1}.
\end{remark}

In our present study,
we are also interested in integrability properties of
continuous solutions of \eqref{eq:SubLE}.
The lower estimate in Theorem \ref{thm:upper-lower-est}
gives a hint of conditions for the existence of a solution in
$\CF(\cl)\cap\LS^\gamma(\dom,d\ms)$.
This is related to Verbitsky \cite{MR3792109},
where the existence of a solution in
$\LS^\gamma(\dom,d\ms)$ of \eqref{eq:SubLE}
was characterized
in terms of $\GF[\ms]\in\LS^{\frac{\gamma}{1-q}}(\dom,d\ms)$
in the case where $\bv\equiv 0$ and $\pms\equiv 0$,
and to \cite{MR3881877,MR4048382},
where the second named author and Verbitsky
characterized the existence and uniqueness of a solution 
in $\dot{\Sob}_0^{1,2}(\dom)\cap\LS^q_{\rm loc}(\dom,d\ms)$
of \eqref{eq:SubLE} with $\bv\equiv0$
in terms of
\begin{equation*}\label{eq:finite-energy}
	\GF[\ms]\in\LS^{\frac{1+q}{1-q}}(\dom,d\ms)
  \quad\text{and}\quad
	\GF[\pms]\in\LS^{1+q}(\dom,d\ms).
\end{equation*}
Here $\DS^{1,2}_0(\dom)$
is the homogeneous Sobolev (or Dirichlet) space defined as
the closure of $\CF_0^{\infty}(\dom)$ with respect to the seminorm 
\begin{equation}\label{eq:DS-norm}
	\|u\|_{\DS^{1,2}_0(\dom)}
  :=
	\| \nabla u \|_{\LS^2(\dom)}.
\end{equation}
For \eqref{eq:SubLE} with $\bv\equiv0$,
it was further shown that
the existence of a solution in
$\DS^{1,2}_{0}(\dom)\cap\LS^q_{\rm loc}(\dom,d\ms)$
is equivalent to the existence of a solution in
$\LS^{1+q}(\dom, d\ms)$.
Note that solutions $u$ of \eqref{eq:SubLE}
with $\ms,\pms$ satisfying the $\GF$-Kato condition
do not always satisfy $\| \nabla u \|_{\LS^2(\dom)}<+\infty$
(see Corollary \ref{exm:kato-2}).

\begin{theorem}\label{thm:L^q-sol}
Under the same assumptions as in Theorem \ref{thm:existence}.
Let $\gamma>q$.
Then \eqref{eq:SubLE} has a solution
$u\in\CF(\cl)\cap\LS^\gamma(\dom,d\ms)$
if and only if both of $\ms$ and $\pms$ satisfy the $\GF$-Kato condition
and
\begin{equation}\label{eq:cond-L^q-sol}
	\GF[\ms]\in\LS^{\frac{\gamma}{1-q}}(\dom,d\ms),
  \quad
	\GF[\nu]+H_{\bv}\in\LS^\gamma(\dom,d\ms).
\end{equation}
Moreover, in this case, we have the following.
\begin{enumerate}
\item
Problem \eqref{eq:SubLE} has a minimal solution in
$\CF(\cl)\cap\LS^\gamma(\dom,d\ms)$.

\item
Assume that either $\gamma=q$ or $\gamma=q+1$ and $\bv\equiv0$
holds.
Then \eqref{eq:SubLE} has exactly one solution in
$\CF(\cl)\cap\LS^\gamma(\dom,d\ms)$.
\end{enumerate}
\end{theorem}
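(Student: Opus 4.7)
The proof separates into (a) the equivalence, (b) the minimality claim (i), and (c) the uniqueness claim (ii). For \emph{necessity} in (a), suppose $u\in\CF(\cl)\cap\LS^\gamma(\dom,d\ms)$ solves \eqref{eq:SubLE}. Theorem \ref{thm:existence} at once yields the $\GF$-Kato condition for both $\ms$ and $\pms$. Inserting the pointwise lower bounds $u\geq\conref{c:lb}\GF[\ms]^{1/(1-q)}$ and $u\geq\GF[\pms]+H_\bv$ from Theorem \ref{thm:upper-lower-est}(i), raising to the $\gamma$-th power, and integrating against $d\ms$ then yields both integrability conditions in \eqref{eq:cond-L^q-sol}.

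For \emph{sufficiency}, Theorem \ref{thm:existence} furnishes a minimal continuous solution $u$, which I would realize as the monotone limit of the Picard iterates $u_0:=\GF[\pms]+H_\bv$ and $u_{n+1}:=\GF[u_n^q\,d\ms]+u_0$. To upgrade $u\in\CF(\cl)$ to $u\in\LS^\gamma(\dom,d\ms)$, the plan is to establish the sharp Brezis--Kamin-type upper bound
\begin{equation*}
	u(x)\leq C_1\GF[\ms](x)^{1/(1-q)}+C_2\bigl(\GF[\pms](x)+H_\bv(x)\bigr), \qquad x\in\dom,
\end{equation*}
matching the lower bound of Theorem \ref{thm:upper-lower-est}(i) up to constants. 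Once this sharper estimate is in hand, \eqref{eq:cond-L^q-sol} together with a triangle (or subadditivity) inequality immediately gives $u\in\LS^\gamma(\dom,d\ms)$. The sharper bound itself is proved by induction on $n$, iterating the integral equation, invoking the uniform $\LS^\infty$-bound $u_n\leq\conref{c:ub}$ of Theorem \ref{thm:upper-lower-est}(iii), and applying the iterated Green potential inequality of Grigor'yan--Verbitsky from \cite{MR4105916}, which is precisely what converts powers of $\GF[\ms]$ produced at each iteration back into $\GF[\ms]^{1/(1-q)}$.

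Claim (i) is then a formality: the solution constructed in the sufficiency step is the minimal continuous solution from Theorem \ref{thm:existence}, hence a fortiori minimal in the subclass $\CF(\cl)\cap\LS^\gamma(\dom,d\ms)$. For the uniqueness in (ii), let $u_1\leq u_2$ be two solutions in the class, with $u_1$ the minimal one constructed above, and set $w:=u_2-u_1\geq 0$; subtracting the integral equations and using the concavity inequality $u_2^q-u_1^q\leq q\,u_1^{q-1}w$ gives $w\leq q\,\GF[u_1^{q-1}w\,d\ms]$. When $\gamma=q$, testing against $u_1^{q-1}\,d\ms$, exploiting the symmetry of $\GF$ via Fubini, and absorbing the result into the integral equation satisfied by $u_1$ forces $w\equiv 0$; when $\gamma=q+1$ and $\bv\equiv0$, the solution coincides with the unique critical point in $\DS^{1,2}_0(\dom)\cap\LS^{q+1}(\dom,d\ms)$ of the functional
\begin{equation*}
	J(u)=\tfrac{1}{2}\|u\|_{\DS^{1,2}_0(\dom)}^2-\tfrac{1}{q+1}\int u^{q+1}\,d\ms-\int u\,d\pms,
\end{equation*}
whose uniqueness is the Díaz--Saa argument already carried out in \cite{MR3881877,MR4048382}.

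The principal obstacle lies in the sufficiency direction: sharpening the pointwise estimate $u\leq\conref{c:ub}^q\GF[\ms]+\GF[\pms]+H_\bv$ of Theorem \ref{thm:upper-lower-est}(ii) to one with exponent $1/(1-q)$ on $\GF[\ms]$. The coarser bound cannot be integrated directly, because $\GF[\ms]\in\LS^\infty(\dom)\cap\LS^{\gamma/(1-q)}(\dom,d\ms)$ does \emph{not} imply $\GF[\ms]\in\LS^\gamma(\dom,d\ms)$ when $\ms(\dom)=+\infty$: since $\gamma<\gamma/(1-q)$, Hölder's inequality points in the wrong direction. Extracting the correct exponent $1/(1-q)$ from the iterative scheme is exactly where the Grigor'yan--Verbitsky inequality does the real work, and this is the technical heart of the proof.
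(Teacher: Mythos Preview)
Your sufficiency argument rests on the pointwise estimate
\[
	u(x)\le C_1\,\GF[\ms](x)^{\frac{1}{1-q}}+C_2\bigl(\GF[\pms](x)+H_\bv(x)\bigr),
\]
but this bound is \emph{false} in general. Take $\dom$ a smooth bounded domain, $\ms=dx$, $\pms\equiv0$, $\bv\equiv0$. Then $\GF[\ms]\approx\dist$ near $\partial\dom$, so your bound would force $u\lesssim\dist^{1/(1-q)}$. Yet the solution of $-\Delta u=u^q$, $u|_{\partial\dom}=0$ is $C^2$ up to the boundary and Hopf's lemma gives $u\approx\dist$, which is strictly larger. The Grigor'yan--Verbitsky inequality $\GF[\ms]^s\le s\,\GF[\GF[\ms]^{s-1}\,d\ms]$ only converts iterated potentials into powers \emph{from below}; it cannot produce the reverse inequality you would need for an upper bound, so the proposed induction cannot close.

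The paper bypasses any sharper pointwise estimate by working directly in $\LS^\gamma(\dom,d\ms)$ along the iterates. The key tool is Verbitsky's weighted norm inequality (Lemma~\ref{lem:normineq}): since $\GF[\ms]\in\LS^{\gamma/(1-q)}(\dom,d\ms)$, one has $\|\GF[\psi\,d\ms]\|_{\LS^\gamma(d\ms)}\lesssim\|\psi\|_{\LS^{\gamma/q}(d\ms)}$ for all $\psi$. Applied with $\psi=u_{j-1}^q$ this gives $\|u_j\|_{\LS^\gamma}\lesssim\|u_{j-1}\|_{\LS^\gamma}^q+\|\GF[\pms]+H_\bv\|_{\LS^\gamma}$, and the sublinear recursion (together with $u_{j-1}\le u_j$) yields a uniform $\LS^\gamma$ bound. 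This is precisely the device that handles the obstacle you correctly identified, namely that $\GF[\ms]\in\LS^\infty\cap\LS^{\gamma/(1-q)}(d\ms)$ does not imply $\GF[\ms]\in\LS^\gamma(d\ms)$.

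Your uniqueness sketch for $\gamma=q$ is also problematic: the quantity $\int u_1^{q-1}w\,d\ms$ need not be finite (since $u_1^{q-1}\to\infty$ at the boundary), and even granting finiteness, the Fubini step leaves you with a factor $\GF[u_1^{q-1}\,d\ms]$ that you cannot control by the integral equation for $u_1$. The paper instead works on regular subdomains $D\Subset\dom$, passes to the weak formulation, cross-tests with $\underline{w}=\msol-H^D_{\msol}$ and $w=u-H^D_u$, and uses the algebraic identity $\msol^q w-u^q\underline{w}=\msol^q u^q(u^{1-q}-\msol^{1-q})+H^D_u(u^q-\msol^q)-u^q(H^D_u-H^D_{\msol})$ before letting $D\uparrow\dom$; see Lemma~\ref{lem:uniqueness}. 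Your treatment of the case $\gamma=q+1$, $\bv\equiv0$ via \cite{MR3881877,MR4048382} matches the paper.
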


This paper is organized as follows.
In Section \ref{sec:pre},
we provide equivalent conditions to the $\GF$-Kato condition
and prove the equicontinuity of a family of certain Green potentials,
which is one of key tools in our approaches.
In Section \ref{sec:ULest},
we prove Theorem \ref{thm:upper-lower-est}.
Two different proofs of Theorem \ref{thm:existence}
are given in Section \ref{sec:existence}.
In Section \ref{sec:L^q-sol},
we address the uniqueness of a positive continuous $\LS^q$-solution and
prove Theorem \ref{thm:L^q-sol}.
In the final section \ref{sec:remark},
we give supplementary remarks on
H\"older continuous solutions and
the existence of a positive $\CF^1$-solution
whose Dirichlet integral diverges.

\section{Preliminaries}\label{sec:pre}

We use the symbol $\con$
to denote an absolute positive constant
whose value may vary at each occurrence.
For two positive functions $f$ and $g$,
we write $f\lesssim g$
if $f(x)\le \con g(x)$ for some positive constant $\con$
independent of variable $x$.

In the study of the stationary Schr\"odinger equation
$\Delta u=\ms u$ in $\R^\di$,
Boukricha--Hansen--Hueber \cite{MR887788} gave
necessary and sufficient conditions
on $\ms\in\mathcal{M}_+(\R^\di)$
for $G_{\R^\di}[\mu]\in\CF(\R^\di)$.
We modify their arguments to
provide equivalent conditions to the $\GF$-Kato condition.
For the sake of convenience,
we make the notational convention that
$B(z,r)=(\R^\di\cup\{\infty\})\setminus\overline{B(0,1/r)}$ and
\begin{equation}\label{eq:conven}
	\int_{\dom\cap B(z,r)} \GF(x,y) \, d\ms(y)
  =
	\int_{\dom\setminus\overline{B(0,1/r)}} \GF(x,y) \, d\ms(y),
\end{equation}
when $z=\infty$.
Since $\GF(\cdot,y)$ vanishes continuously on $\bdy$ for each $y\in\dom$
by the assumption that
$\dom$ is regular for the Dirichlet problem,
we note that
$\GF[\ms]$ is extended to $\cl$
as a lower semicontinuous function on $\cl$,
by assigning its value $0$ on $\bdy$.

\begin{lemma}\label{lem:equivKato}
Let $\ms\in\RM$. Then the following are equivalent:
\begin{enumerate}[\quad\rm (a)]\itemsep=2pt
\item
$\ms$ satisfies the $\GF$-Kato condition,
\item
$\displaystyle
\lim_{r\to0+} \biggl( \sup_{x\in\dom}
\int_{\dom\cap B(z,r)} \GF(x,y) \, d\ms(y) \biggr)
=0$\,
for any $z\in\cl$,
\item
$\displaystyle
\lim_{r\to0+} \biggl( \sup_{z\in\cl}
\biggl( \sup_{x\in\dom}
\int_{\dom\cap B(z,r)} \GF(x,y) \, d\ms(y) \biggr) \biggr)
=0$.
\end{enumerate}
\end{lemma}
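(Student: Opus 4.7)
The implications (c) $\Rightarrow$ (b) and (c) $\Rightarrow$ (a) are immediate and I would dispatch them first: the former follows by simply dropping the outer supremum over $z$, while the latter follows by specialising $z = x$ (which recovers \eqref{eq:katocond-1}) and $z = \infty$ (which recovers \eqref{eq:katocond-2} via the convention \eqref{eq:conven}). Thus the lemma reduces to showing (a) $\Rightarrow$ (c) and (b) $\Rightarrow$ (c), which I would treat separately.

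For (a) $\Rightarrow$ (c), the plan is to invoke the maximum principle for Green potentials. For fixed $z \in \cl$ and $r > 0$, I would consider
\[
    u_{z, r}(x) := \int_{\dom \cap B(z, r)} \GF(x, y) \, d\ms(y),
\]
which is a Green potential: superharmonic in $\dom$, harmonic in $\dom \setminus \overline{B(z, r)}$, and vanishing continuously on $\bdy$ by the regularity of $\dom$. The maximum principle then gives
\[
    \sup_{x \in \dom} u_{z, r}(x) = \sup_{x \in \dom \cap \overline{B(z, r)}} u_{z, r}(x).
\]
For $z \in \R^\di$, any $x \in \overline{B(z, r)}$ satisfies $B(z, r) \subset B(x, 2r)$ by the triangle inequality, hence $u_{z, r}(x) \le \int_{\dom \cap B(x, 2r)} \GF(x, y) \, d\ms(y)$; passing to the outer $\sup$ over $x$ and applying \eqref{eq:katocond-1} at radius $2r$ delivers the bound uniformly in $z \in \R^\di$. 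For $z = \infty$, the convention \eqref{eq:conven} identifies $u_{\infty, r}(x)$ with $\int_{\dom \setminus B(0, 1/r)} \GF(x, y) \, d\ms(y)$, which is controlled directly by \eqref{eq:katocond-2}. Combining the two cases yields (c).

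For (b) $\Rightarrow$ (c), I would exploit the compactness of $\cl$ in the one-point compactification of $\R^\di$. Given $\varepsilon > 0$, (b) supplies for each $z_0 \in \cl$ a radius $s_{z_0} > 0$ with $\sup_x u_{z_0, s_{z_0}}(x) < \varepsilon$. The open sets $\{B(z_0, s_{z_0}/2)\}_{z_0 \in \cl}$ cover $\cl$, so a finite subcover $\{B(z_j, s_j/2)\}_{j = 1}^{N}$ exists. A threshold $\rho > 0$ depending on the $s_j$'s then ensures that for every $z' \in \cl$ and every $r \le \rho$, one has $z' \in B(z_j, s_j/2)$ for some $j$ and $B(z', r) \subset B(z_j, s_j)$, so $\sup_x u_{z', r}(x) \le \sup_x u_{z_j, s_j}(x) < \varepsilon$ uniformly in $z' \in \cl$.

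The main obstacle I anticipate is the interaction between the Euclidean metric and the neighbourhood basis at $\infty$ prescribed by \eqref{eq:conven}: the naive triangle-inequality inclusion $B(z', r) \subset B(z_j, s_j)$ does not hold uniformly when $z_j = \infty$ and $z' \in \R^\di$ has large but finite norm, forcing $\rho \le \min_j \min(s_j/2, 1/s_j)$ together with a small case split on whether each $z_j$ is finite or infinite. Modulo this bookkeeping, the maximum-principle reduction and the compactness argument deliver the two non-trivial implications and close the chain of equivalences.
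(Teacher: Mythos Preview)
Your proposal follows essentially the same route as the paper: the paper proves the cycle (a) $\Rightarrow$ (b) $\Rightarrow$ (c) $\Rightarrow$ (a), using exactly the two ideas you isolate --- the reduction $\sup_{x\in\dom}u_{z,r}=\sup_{x\in\dom\cap B(z,r)}u_{z,r}$ followed by the inclusion $B(z,r)\subset B(x,2r)$ for (a) $\Rightarrow$ (b), and a finite-subcover argument on $\cl$ (with the same case split at $\infty$ you anticipate) for (b) $\Rightarrow$ (c). Your (a) $\Rightarrow$ (c) simply merges the paper's (a) $\Rightarrow$ (b) with the observation that the resulting bound is already uniform in $z$.

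One technical point worth tightening: the reduction $\sup_{x\in\dom}u_{z,r}=\sup_{x\in\dom\cap \overline{B(z,r)}}u_{z,r}$ is obtained in the paper via the \emph{domination principle} (\cite[Theorem~5.1.11]{MR1801253}), not the bare maximum principle. Your justification requires $u_{z,r}$ to vanish continuously on $\bdy$, but at this stage only lower semicontinuity of the extension is available (cf.\ the remark preceding the lemma); when $z\in\partial\dom$ the restricted measure $\ms|_{\dom\cap B(z,r)}$ need not have compact support in $\dom$, so continuous vanishing at nearby boundary points is not automatic. The domination principle bypasses any boundary considerations: if the potential $u_{z,r}$ is majorised by a constant $M$ on the support of its defining measure, then $u_{z,r}\le M$ throughout $\dom$. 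With this adjustment your argument is complete and matches the paper's.
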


\begin{proof}
(a) $\Rightarrow$ (b).\:
Let $z\in\overline{\dom}$.
We see from the domination principle
(\cite[Theorem 5.1.11]{MR1801253}) that
\begin{align*}
	\sup_{x\in\dom} \int_{\dom\cap B(z,r)} \GF(x,y) \, d\ms(y)
  &=
	\sup_{x\in\dom\cap B(z,r)} \int_{\dom\cap B(z,r)} \GF(x,y) \, d\ms(y)  \\
  &\le
	\sup_{x\in\dom\cap B(z,r)} \int_{\dom\cap B(x,2r)} \GF(x,y) \, d\ms(y).
\end{align*}
By \eqref{eq:katocond-1}, the right hand side goes to zero as $r\to0+$.
The case $z=\infty$ of (b) is equivalent to \eqref{eq:katocond-2}
by our convention \eqref{eq:conven}.
Hence (b) follows.

(b) $\Rightarrow$ (c).\:
We only consider the case where $\dom$ is unbounded.
An argument below can be modified easily to the case of bounded domains.
Let $\varepsilon>0$.
For each $z\in\cl$, we find $r_z>0$ such that
\[
	\sup_{x\in\dom} \int_{\dom\cap B(z,2r_z)} \GF(x,y) \, d\ms(y) < \varepsilon.
\]
Since $\cl$ is compact,
there exist  $z_1,\dots,z_{\ell-1}\in\dom$ and $z_\ell=\infty$ such that
$\cl\subset \bigcup_{i=1}^\ell B(z_i,r_i)$,
where $r_i:=r_{z_i}$ and
$B(z_\ell,r_\ell):=(\R^\di\cup\{\infty\})\setminus\overline{B(0,1/r_\ell)}$.
Let $\delta:=\min\{r_1,\dots,r_\ell,1/(2r_\ell)\}$
and let $w\in\cl$.
Then there is $j\in\{1,\dots,\ell\}$ such that
$w\in B(z_j,r_j)$.
If $j\in\{1,\dots,\ell-1\}$, then
$B(w,\delta)\subset B(w,r_j)\subset B(z_j,2r_j)$,
and so
\[
	\sup_{x\in\dom} \int_{\dom\cap B(w,\delta)} \GF(x,y) \, d\ms(y)
  \le
	\sup_{x\in\dom} \int_{\dom\cap B(z_j,2r_j)} \GF(x,y) \, d\ms(y)
  < \varepsilon.
\]
If $j=\ell$, then
$B(w,\delta)\subset (\R^\di\cup\{\infty\})\setminus\overline{B(0,1/(2r_\ell))}=B(z_\ell,2r_\ell)$, and so
\[
	\sup_{x\in\dom} \int_{\dom\cap B(w,\delta)} \GF(x,y) \, d\mu(y)
  \le
	\sup_{x\in\dom} \int_{\dom\cap B(z_\ell,2r_\ell)} \GF(x,y) \, d\mu(y)
  < \varepsilon.
\]
These yield that
\[
	\sup_{w\in\cl}\biggl( \sup_{x\in\dom}
	\int_{\dom\cap B(w,\delta)} \GF(x,y) \, d\ms(y) \biggr)
  < \varepsilon,
\]
and hence (c) follows.

(c) $\Rightarrow$ (a).\:
This follows by taking $x=z\in\dom$ and $z=\infty$.
\end{proof}

As above, the finite covering argument yields the following.

\begin{lemma}\label{lem:bdd-pot}
Let $\ms\in\RM$ satisfy the $\GF$-Kato condition.
Then $\GF[\ms]\in\LS^\infty(\dom)$.
\end{lemma}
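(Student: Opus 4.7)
The plan is to deduce the lemma directly from the equivalent formulation (c) in Lemma \ref{lem:equivKato}, via a finite covering of the compactified closure $\cl$ — exactly the kind of argument flagged by the author's preceding sentence.

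First I would use Lemma \ref{lem:equivKato}(c) to fix some $\delta>0$ (say, corresponding to $\varepsilon=1$) such that
\[
   \sup_{w\in\cl}\,\sup_{x\in\dom}
   \int_{\dom\cap B(w,\delta)} \GF(x,y)\,d\ms(y) \le 1,
\]
where at $w=\infty$ the ball $B(w,\delta)$ is understood via convention \eqref{eq:conven}. Next, since $\cl$ is a compact subset of the one point compactification $\R^\di\cup\{\infty\}$, I would extract a finite subcover: choose $w_1,\dots,w_N\in\cl$ (with $w_N=\infty$ if $\dom$ is unbounded) with $\cl\subset\bigcup_{i=1}^N B(w_i,\delta)$. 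In particular $\dom\subset\bigcup_{i=1}^N B(w_i,\delta)$.

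Then for arbitrary $x\in\dom$ I would simply split the integral:
\[
   \GF[\ms](x)
 = \int_\dom \GF(x,y)\,d\ms(y)
 \le \sum_{i=1}^N \int_{\dom\cap B(w_i,\delta)} \GF(x,y)\,d\ms(y)
 \le N.
\]
Since $N$ is independent of $x$, we conclude $\|\GF[\ms]\|_\infty\le N<+\infty$, i.e.\ $\GF[\ms]\in\LS^\infty(\dom)$.

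There is no real obstacle: the Kato condition (a) provides local smallness of the potential only near individual points, and the sole work is to convert this into a \emph{uniform} global bound. That conversion is precisely what the uniform covering formulation (c) — already established in Lemma \ref{lem:equivKato} — together with compactness of $\cl$ is designed to do, so the proof is essentially a one-line application once (c) and the finite subcover are in place.
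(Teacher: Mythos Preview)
Your proof is correct and follows precisely the approach the paper indicates: the authors' proof is just the remark ``As above, the finite covering argument yields the following,'' and your use of Lemma~\ref{lem:equivKato}(c) together with compactness of $\cl$ to extract a finite cover and sum the local bounds is exactly that argument spelled out.
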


The restriction of $\ms$ to a Borel set $E$ is denoted by $\ms|_E$.

\begin{lemma}\label{lem:cont-Kato}
Let $\ms\in\RM$ and $\conlabel{c:equicont}>0$.
Then the following are equivalent:
\begin{enumerate}[\quad\rm (a)]
\item
$\mu$ satisfies the $\GF$-Kato condition,
\item
$\GF[\ms|_E]\in\CF(\cl)$ for any Borel subset $E$ of $\dom$,
\item
$\GF[\ms]\in\CF(\cl)$,
\item
the family $\{\GF[\psi\,d\ms]:\psi\in\Psi\}$ is equicontinuous on $\cl$, where
\[
	\Psi
  :=
	\{\psi\in\LS^\infty(\dom):0\le \psi\le \conref{c:equicont} \text{ a.e. on } \dom \}.
\]
\end{enumerate}
Moreover, in this case, $\GF[\psi\, d\ms]$ vanishes continuously on $\bdy$
for each $\psi\in\Psi$.
\end{lemma}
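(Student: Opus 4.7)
The plan is to close the cycle
$(\mathrm{a}) \Rightarrow (\mathrm{d}) \Rightarrow (\mathrm{b}) \Rightarrow (\mathrm{c}) \Rightarrow (\mathrm{a})$. The two formal steps $(\mathrm{d}) \Rightarrow (\mathrm{b}) \Rightarrow (\mathrm{c})$ follow immediately: equicontinuity at every point of $\cl$ forces each individual $\GF[\psi\,d\ms]$ to lie in $\CF(\cl)$, so taking $\psi=\chi_E$ (rescaled into $\Psi$ if $\conref{c:equicont}<1$) yields $\GF[\ms|_E]\in\CF(\cl)$, with $E=\dom$ giving (c). The ``Moreover'' clause is also immediate from (d): as $\GF[\psi\,d\ms]$ is extended to $\cl$ by assigning the value $0$ on $\bdy$, continuity on $\cl$ is precisely vanishing continuously on $\bdy$.

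For $(\mathrm{a}) \Rightarrow (\mathrm{d})$, the strategy is to establish equicontinuity at an arbitrary $z_0\in\cl$. Given $\varepsilon>0$, Lemma~\ref{lem:equivKato}(c) produces $\delta_0>0$ with
\[
	\sup_{w\in\cl}\sup_{x\in\dom}\int_{\dom\cap B(w,\delta_0)}\GF(x,y)\,d\ms(y)<\frac{\varepsilon}{6\conref{c:equicont}},
\]
and one fixes $R>0$ large so that $\sup_x\int_{\dom\setminus B(0,R)}\GF(x,y)\,d\ms(y)<\varepsilon/(6\conref{c:equicont})$. For $x\in\dom$ close to $z_0$ in $\cl$ and $\psi\in\Psi$, estimate $|\GF[\psi\,d\ms](x)-\GF[\psi\,d\ms](z_0)|$ (with $\GF[\psi\,d\ms](z_0)=0$ when $z_0\in\bdy$) by splitting the integral over $\dom$ into three disjoint parts: $\dom\cap B(z_0,\delta_0)$, $\dom\setminus B(0,R)$, and $K:=\dom\cap\overline{B(0,R)}\setminus B(z_0,\delta_0)$. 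The first two contribute at most $\varepsilon/3$ each, uniformly in $\psi$ and $x$, once one sums the $\GF(x,y)$ and $\GF(z_0,y)$ pieces. For the middle region, $\GF(\cdot,y)$ is continuous at $z_0$ uniformly in $y\in K$---away from the diagonal when $z_0\in\dom$, and by regularity of $\partial\dom$ combined with a further Kato-based splitting of $K$ by distance to $\partial\dom$ when $z_0\in\bdy$---so, since $\ms(K)<\infty$ by Lemma~\ref{lem:bdd-pot}, this piece drops below $\varepsilon/3$ once $x$ is close enough to $z_0$, uniformly in $\psi$.

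For $(\mathrm{c}) \Rightarrow (\mathrm{a})$, argue by contradiction. If \eqref{eq:katocond-1} fails, extract $\varepsilon_0>0$, $r_k\downarrow 0$, and $x_k\in\dom$ with $\int_{\dom\cap B(x_k,r_k)}\GF(x_k,y)\,d\ms(y)\ge\varepsilon_0$; by compactness of $\cl$, pass to a subsequence with $x_k\to x^*\in\cl$. If $x^*\in\bdy$, continuity of $\GF[\ms]$ on $\cl$ with boundary value $0$ forces $\GF[\ms](x_k)\to 0$, contradicting the lower bound. If $x^*\in\dom$, then $\GF[\ms]\in\LS^\infty(\dom)$ excludes an atom of $\ms$ at $x^*$, and Fatou's lemma applied to $\GF(x_k,y)\chi_{\dom\setminus B(x_k,r_k)}(y)$ yields
\[
	\liminf_{k\to\infty}\int_{\dom\setminus B(x_k,r_k)}\GF(x_k,y)\,d\ms(y)\ge\GF[\ms](x^*),
\]
while continuity gives $\GF[\ms](x_k)\to\GF[\ms](x^*)$; subtracting from the total $\GF[\ms](x_k)\ge\varepsilon_0+\int_{\dom\setminus B(x_k,r_k)}\GF(x_k,y)\,d\ms(y)$ forces $\varepsilon_0\le 0$, a contradiction. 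The same compactness reasoning, via the convention $B(\infty,r)=(\R^\di\cup\{\infty\})\setminus\overline{B(0,1/r)}$ and Fatou on $\chi_{|y|\le 1/r_k}$, also disposes of \eqref{eq:katocond-2}.

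The main obstacle will be the middle-region uniform-continuity claim in $(\mathrm{a})\Rightarrow(\mathrm{d})$ when $z_0\in\bdy$: upgrading the pointwise vanishing $\GF(x,y)\to 0$ as $x\to z_0$ to a bound uniform in $y$ on the (possibly non-compact-in-$\dom$) annular set $K$. The intended resolution combines Lemma~\ref{lem:equivKato}(c), applied via a finite cover of $\partial\dom\cap\overline{B(0,R)}$, to control the part of $K$ close to $\partial\dom$, together with genuine uniform continuity of $\GF$ off the diagonal on the remaining compact piece.
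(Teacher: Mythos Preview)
Your cycle $(\mathrm{d})\Rightarrow(\mathrm{b})\Rightarrow(\mathrm{c})$ and the Fatou-based $(\mathrm{c})\Rightarrow(\mathrm{a})$ are fine; the latter is a legitimate alternative to the paper's route, which instead proves $(\mathrm{c})\Rightarrow(\mathrm{b})$ by the semicontinuity identity $\GF[\ms|_E]=\GF[\ms]-\GF[\ms|_{\dom\setminus E}]$ and then $(\mathrm{b})\Rightarrow(\mathrm{a})$ by Dini's theorem.

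The genuine gap is in your $(\mathrm{a})\Rightarrow(\mathrm{d})$, where you assert ``$\ms(K)<\infty$ by Lemma~\ref{lem:bdd-pot}''. Lemma~\ref{lem:bdd-pot} only gives $\GF[\ms]\in L^\infty(\dom)$; it does \emph{not} give finite $\ms$-mass on bounded sets that touch $\partial\dom$. In fact the paper's own Lemma~\ref{lem:kato-1} furnishes a counterexample: on a bounded $C^{1,1}$ domain, $d\ms_\alpha=\dist(y)^{-\alpha}\,dy$ satisfies the $\GF$-Kato condition for every $\alpha<2$, yet $\ms_\alpha(\dom)=\infty$ once $\alpha\ge 1$. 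For such $\ms$ your annulus $K$ has infinite mass, and the step ``$\sup_{y\in K}|\GF(x,y)-\GF(z_0,y)|\cdot\ms(K)\to 0$'' collapses. The finite-cover idea for the boundary strip does not rescue this: the number $N$ of covering balls depends on the radius $\delta$ coming from Lemma~\ref{lem:equivKato}(c), which in turn depends on the target accuracy, so bounding the strip integral by $N$ times the per-ball bound is circular.

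The paper's proof of $(\mathrm{a})\Rightarrow(\mathrm{d})$ sidesteps the mass issue entirely. For $z_0\in\dom$ it uses the Harnack inequality to obtain the \emph{multiplicative} estimate
\[
|\GF(x,y)-\GF(z_0,y)|\le c(\kappa)\,\GF(z_0,y)\qquad(y\notin B(z_0,\kappa\dist(z_0)),\ x\in B(z_0,\kappa^2\dist(z_0))),
\]
which integrates against $d\ms$ using only $\|\GF[\ms]\|_\infty<\infty$, not $\ms(K)<\infty$. For $z_0\in\bdy$ it observes that $\GF[\ms|_{\dom\setminus B(z_0,r)}]$ is a bounded Green potential which is harmonic near $z_0$ and therefore vanishes at the regular point $z_0$; again no finiteness of $\ms$ on $K$ is needed. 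Replacing your uniform-continuity-plus-finite-mass argument by either of these devices repairs the proof.
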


\begin{proof}
(d) $\Rightarrow$ (c).\:
Trivial.

(c) $\Rightarrow$ (b).\:
As mentioned in front of Lemma \ref{lem:equivKato},
$\GF[\ms|_E]$ has a lower semicontinuous extension to $\cl$.
Also,
$\GF[\ms|_E]=\GF[\ms]-\GF[\ms|_{\dom\setminus E}]$
and the right hand side is upper semicontinuous on $\cl$.
Therefore $\GF[\ms|_E]\in\CF(\cl)$.

(b) $\Rightarrow$ (a).\:
Let $z\in\dom$.
Observe that $\ms(\{z\})=0$ since $\ms(\{z\})\GF(z,z)\le\GF[\ms](z)<+\infty$.
By the absolutely continuity of integrals, we have
\begin{equation}\label{eq:pr-cont-kato-1}
	\lim_{r\to0+} \int_{\dom\cap B(z,r)} \GF(x,y) \, d\mu(y) =0
  \qtext{for each } x\in\cl.
\end{equation}
This is true for $z\in\bdy$ by the same reasoning.
Since $\GF[\ms|_{\dom\cap B(z,r)}]\in\CF(\cl)$ for each $r>0$,
the Dini theorem ensures that
the convergence in \eqref{eq:pr-cont-kato-1} is uniform for $x\in\cl$.
Thus (a) follows from Lemma \ref{lem:equivKato}.

(a) $\Rightarrow$ (d).\:
We only consider the case $\dom\neq\R^\di$.
The case $\dom=\R^\di$ can be handled similarly
if replacing $\dist(z)$ to $1$ below.
Let $\psi\in\Psi$, let $z\in\dom$,
let $0<\kappa<1$ and let $\varepsilon>0$.
Applying the Harnack inequality to the positive harmonic function
$\GF(\cdot,y)$ on $\dom\setminus\{y\}$,
we see that there exists a function
$c:(0,1)\to(0,+\infty)$ satisfying
$c(\kappa)\to0$ as $\kappa\to0$, and
\[
	\frac{1}{1+c(\kappa)} \GF(z,y)
  \le
	\GF(x,y)
  \le
	\{1+c(\kappa)\} \GF(z,y)
\]
for all $x\in B(z,\kappa^2\dist(z))$ and
$y\in\dom\setminus B(z,\kappa\dist(z))$.
Therefore, for all $x\in B(z,\kappa^2\dist(z))$,
\[
	\int_{\dom\setminus B(z,\kappa\dist(z))}
	|\GF(x,y)-\GF(z,y)| \, d\ms(y)
  \le
	c(\kappa) \|\GF[\ms]\|_\infty,
\]
and so by Lemma \ref{lem:bdd-pot},
\[
	\int_{\dom\setminus B(z,\kappa\dist(z))}
	|\GF(x,y)-\GF(z,y)| \, d\ms(y)
  \le
	\frac{\varepsilon}{2},
\]
whenever $\kappa$ is small enough.
Also, it follows from Lemma \ref{lem:equivKato} that
\[
	\int_{B(z,\kappa\dist(z))} \GF(x,y) \, d\ms(y)
  \le
	\frac{\varepsilon}{4}
  \qtext{for all } x\in \dom,
\]
if $\kappa$ is sufficiently small.
These estimates imply that
\[
	|\GF[\psi\,d\ms](x)-\GF[\psi\,d\ms](z)|
  \le
	\conref{c:equicont} \int_\dom |\GF(x,y)-\GF(z,y)| \, d\ms(y)
  \le
	\conref{c:equicont} \varepsilon,
\]
whenever $x\in B(z,\kappa^2\dist(z))$ and
$\kappa$ is sufficiently small.
Thus $\GF[\psi\, d\ms]$ is continuous at $z$
uniformly for $\psi\in\Psi$.
If $z\in\partial\dom$,
then by Lemma \ref{lem:equivKato} there exists $r>0$ such that
\[
	\int_{\dom\cap B(z,r)} \GF(x,y) \, d\ms(y)
  <
	\varepsilon
  \qtext{for all } x\in\dom.
\]
Since $\GF[\mu|_{\dom\setminus B(z,r)}]\in\CF(\overline{\dom}\cap B(z,r))$
and vanishes at $z$,
we see that $\GF[\ms]$ vanishes continuously at $z$.
Hence $\GF[\psi\, d\ms](x)\to0$ uniformly for $\psi\in\Psi$
as $x\to z$.
In the same way, we can show that
$\GF[\psi\, d\ms](x)\to0$  uniformly for $\psi\in\Psi$
as $x\to \infty$.
Hence (d) is proved.
\end{proof}

\section{Proof of Theorem \ref{thm:upper-lower-est}}\label{sec:ULest}

The following lower estimate for supersolutions of
sublinear elliptic equations
and iterated inequality for Green potentials
established recently by Grigor'yan and Verbitsky \cite{MR4105916}
are helpful tools in our approaches.

\begin{lemma}[{\cite[Theorem 1.3]{MR4105916}}]\label{lem:LB-1}
Let $0<q<1$ and $\ms\in\RM$.
If $u\in\LS^q_{\rm loc}(\dom,d\ms)$
is a positive solution of the integral inequality
$u\ge\GF[u^q\,d\ms]$ in $\dom$,
then
\[
	u(x) \ge (1-q)^{\frac{1}{1-q}} \GF[\ms](x)^{\frac{1}{1-q}}
  \qtext{for all } x\in\dom.
\]
\end{lemma}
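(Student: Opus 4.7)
The plan is to reduce the sublinear integral inequality to a linear one via a chain-rule computation applied to a suitable power of the Green potential appearing on the right-hand side. Set $v:=\GF[u^{q}\,d\ms]$, so that by hypothesis $0<v\le u$ on $\dom$, and $-\Delta v=u^{q}\ms$ distributionally; this is legitimate because $u\in\LS^{q}_{\rm loc}(\dom,d\ms)$ guarantees that $v$ is a well-defined Green potential. Since the map $t\mapsto t^{1-q}$ is concave and increasing on $(0,\infty)$, the function $v^{1-q}$ is again superharmonic on $\dom$, and hence admits a Riesz decomposition.

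Formally, the chain rule for smooth $v$ yields
\[
 -\Delta(v^{1-q})
 =(1-q)\,v^{-q}(-\Delta v)+q(1-q)\,v^{-q-1}|\nabla v|^{2};
\]
dropping the nonnegative gradient term and using $-\Delta v=u^{q}\ms$ gives the distributional inequality $-\Delta(v^{1-q})\ge(1-q)\,v^{-q}u^{q}\,\ms$. The trivial bound $v\le u$ implies $v^{-q}u^{q}\ge 1$, and so $-\Delta(v^{1-q})\ge(1-q)\,\ms$ as measures on $\dom$. Writing the Riesz decomposition $v^{1-q}=\GF[\sigma]+h$ with $\sigma:=-\Delta(v^{1-q})\ge(1-q)\,\ms$ and $h\ge 0$ harmonic, monotonicity of $\GF$ delivers $v^{1-q}\ge\GF[\sigma]\ge(1-q)\GF[\ms]$. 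Combined with $u\ge v$ this yields $u^{1-q}\ge(1-q)\GF[\ms]$, which is exactly the claim after raising to the power $\frac{1}{1-q}$.

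The main obstacle is rigorously justifying the chain-rule step: $v$ is only superharmonic, so $|\nabla v|^{2}$ need not be locally integrable, and $-\Delta v=u^{q}\ms$ is only a distributional identity. I would handle this by monotone approximation. Choose a sequence $\ms_{k}\uparrow\ms$ of compactly supported measures in $\dom$ with $\GF[\ms_{k}]\in\LS^{\infty}(\dom)$ (standard truncation), and set $v_{k}:=\GF[u^{q}\,d\ms_{k}]\le v$. Each $v_{k}$ is a bounded, continuous Green potential of a nice measure, and for such $v_{k}$ the chain-rule inequality can be established in the weak form
\[
 \int_{\dom}v_{k}^{1-q}\,(-\Delta\phi)\,dx
 \ge(1-q)\int_{\dom}v_{k}^{-q}u^{q}\,\phi\,d\ms_{k}
\]
for every nonnegative test function $\phi$, by mollifying $v_{k}$ in the interior (where classical calculus applies) and passing to the mollification limit by dominated convergence. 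A final monotone passage $k\to\infty$, with $v_{k}\uparrow v$ and $\ms_{k}\uparrow\ms$, recovers $-\Delta(v^{1-q})\ge(1-q)\,\ms$ distributionally on all of $\dom$, after which the Riesz decomposition argument above finishes the proof. An alternative, more constructive route would use the iterative scheme $u_{0}=0$, $u_{n+1}=\GF[u_{n}^{q}\,d\ms]$, combined with a Jensen-type iterated inequality for Green potentials to build up the bound, but tracking the recursion on exponents and constants seems to me more delicate than the chain-rule approach.
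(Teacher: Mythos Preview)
The paper does not supply a proof of this lemma; it is quoted from Grigor'yan--Verbitsky \cite[Theorem~1.3]{MR4105916}. Your chain-rule heuristic is sound, and in the Laplacian setting it can be made into a proof, but the approximation you sketch has real gaps. First, with only $u\in\LS^q_{\rm loc}(\dom,d\ms)$ there is no reason for $v_k:=\GF[u^q\,d\ms_k]$ to be ``a bounded, continuous Green potential'' even when $\ms_k$ is compactly supported with $\GF[\ms_k]\in\LS^\infty$; the density $u^q$ may well be unbounded. Second, after mollifying you obtain $-\Delta\bigl((v_k^\epsilon)^{1-q}\bigr)\ge(1-q)(v_k^\epsilon)^{-q}\,(u^q\ms_k)*\rho_\epsilon$, and the passage to the limit on the right-hand side is not a matter of dominated convergence: the factor $(v_k^\epsilon)^{-q}$ is evaluated at $x$ while $u^q$ inside the convolution sits at nearby points $y$, and neither $v_k^{-q}$ nor $u^q$ is known to be continuous, so weak convergence of $(u^q\ms_k)*\rho_\epsilon\,dx$ to $u^q\,d\ms_k$ cannot simply be paired with the pointwise limit of $(v_k^\epsilon)^{-q}$. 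A route that does work, still in your spirit, is to argue with spherical means: for superharmonic $v>0$ two uses of concavity give $v(x)^{1-q}-A_r(v^{1-q})(x)\ge(1-q)\,v(x)^{-q}\bigl(v(x)-A_r v(x)\bigr)$, and integrating against a test function and sending $r\to0$ yields $-\Delta(v^{1-q})\ge(1-q)\,v^{-q}(-\Delta v)$ as measures without any mollification.

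For comparison, the argument in \cite{MR4105916} is of a different character. There $G$ is an arbitrary lower-semicontinuous symmetric kernel on a measure space satisfying only a weak maximum (domination) principle, so no Laplacian, gradient, or chain rule is available; the proof proceeds entirely through pointwise integral inequalities at the kernel level. The iterative alternative you dismiss at the end---built on the companion inequality $\GF[\ms]^s\le s\,\GF[\GF[\ms]^{s-1}\,d\ms]$ (Lemma~\ref{lem:iterateGF} here)---is in fact much closer to what Grigor'yan and Verbitsky actually do.
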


\begin{lemma}[{\cite[Lemma 2.5]{MR4105916}}]\label{lem:iterateGF}
Let $\ms\in\RM$ and $s\ge1$. Then 
\[
	\GF[\ms](x)^s \le s \GF[\GF[\ms]^{s-1}\,d\ms](x)
  \qtext{for all } x\in\dom.
\]
\end{lemma}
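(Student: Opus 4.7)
The plan is to interpret the inequality as a comparison between two Green potentials and to derive it from a one-line chain-rule identity. Write $u:=\GF[\ms]$, so that $-\Delta u=\ms$ distributionally in $\dom$ and, by the standing regularity assumption, $u$ vanishes continuously on $\bdy$. The heart of the matter is that $u^s$ should behave as a subsolution of $-\Delta w=s u^{s-1}\ms$ with zero boundary data, and $s\GF[u^{s-1}\,d\ms]$ is precisely the Green-potential solution of this Poisson problem; the comparison principle then closes the argument.

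Concretely, I would first treat the case where $\ms$ has a smooth compactly supported density in $\dom$, so that $u\in\CF^\infty(\dom)\cap\CF(\cl)$ with $u=0$ on $\bdy$. A direct chain-rule computation gives
\[
-\Delta(u^s)=s u^{s-1}(-\Delta u)-s(s-1)u^{s-2}|\nabla u|^2\le s u^{s-1}\ms
\]
for every $s\ge1$, since the discarded term is nonnegative. Consequently, $w:=s\GF[u^{s-1}\,d\ms]-u^s$ is superharmonic in $\dom$ (as the difference of a Green potential and a classical subsolution with matching Laplacian) and vanishes on $\bdy$, so the weak maximum principle forces $w\ge0$ on $\dom$, which is the desired inequality.

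For a general $\ms\in\RM$, I would extend by approximation: choose an exhaustion $K_1\Subset K_2\Subset\cdots\Subset\dom$ with $\bigcup_n K_n=\dom$, set $\ms_n:=\rho_{\varepsilon_n}\ast(\ms|_{K_n})$ for a standard nonnegative mollifier with $\varepsilon_n$ small enough that $\operatorname{supp}\ms_n\Subset\dom$, and apply the smooth-case bound to obtain $u_n^s\le s\GF[u_n^{s-1}\,d\ms_n]$ for $u_n:=\GF[\ms_n]$. Pointwise convergence $u_n(x)\to u(x)$ on $\dom$ follows from Fubini together with the convergence $\GF(x,\cdot)\ast\rho_{\varepsilon_n}\to\GF(x,\cdot)$ off the diagonal; a second application of Fubini combined with lower semicontinuity of Green potentials transfers the inequality to the limit.

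The main obstacle I expect is this final limit step, because the approximating sequence $\ms_n$ is not obviously monotone: mollification can spread mass outside $K_n$, so even though $\ms|_{K_n}$ is monotone in $n$, the mollified versions need not be. The cleanest workaround is to avoid monotone convergence altogether, using lower semicontinuity of both sides in $x$ and Fatou's lemma applied to the integrals defining the Green potentials, after verifying that $\ms_n\to\ms$ vaguely and that $u_n$ stays dominated by a fixed $\ms$-integrable majorant on compact subsets of $\dom$. Once this technical step is handled, the proof reduces to the one-line PDE computation above.
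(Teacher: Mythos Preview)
The paper does not prove this lemma; it is cited without proof from \cite[Lemma~2.5]{MR4105916}. So there is nothing in the paper to compare your argument against, and I can only comment on the proposal itself and on how it relates to the original source.

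Your smooth-density step is correct. The gap is in the limit. To pass $u_n^s\le s\,\GF[u_n^{s-1}\,d\ms_n]$ to the limit you need
\[
\limsup_{n}\GF[u_n^{s-1}\,d\ms_n](x)\le\GF[u^{s-1}\,d\ms](x),
\]
but Fatou and lower semicontinuity of Green potentials under vague convergence give the \emph{opposite} direction $\liminf_n\ge$, not $\limsup_n\le$. The obvious shortcut---bounding $\GF[u_n^{s-1}\,d\ms_n]\le\GF[u^{s-1}\,d\ms]$ via $u_n\le u$ and $\ms_n\le\ms$---also fails, because mollified measures are not dominated by the original: if $\ms$ is a Dirac mass then $\rho_\varepsilon\ast\ms$ puts mass on a whole ball where $\ms$ has none. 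A correct route would separate the two approximations: first restrict $\ms$ to compacta (this step \emph{is} monotone and passes to the limit cleanly, since then both $u_k\le u$ and $\ms_k\le\ms$), and then, for compactly supported $\ms$, run the chain rule at the level of $W^{1,2}_{\mathrm{loc}}$ using an increasing approximation by measures with bounded continuous potentials rather than mollification. That is a nontrivial rewrite of your final step, not a detail.

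By contrast, the proof in \cite{MR4105916} avoids PDE and approximation altogether. It works directly with the kernel, using the maximum-principle inequality $\GF(y,z)\ge\min\{\GF(x,y),\GF(x,z)\}$ to obtain, for every $y$,
\[
u(y)\ge\int_0^{\GF(x,y)}\ms(\{\GF(x,\cdot)>t\})\,dt,
\]
after which a one-variable layer-cake computation gives $\int_\dom\GF(x,y)\,u(y)^{s-1}\,d\ms(y)\ge s^{-1}u(x)^s$ for arbitrary $\ms\in\RM$ in one stroke. Your approach, if repaired, trades that kernel inequality for standard PDE tools, which may be more familiar but requires the regularity bookkeeping above.
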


\begin{proof}[Proof of Theorem \ref{thm:upper-lower-est}]
Let $u$ be any bounded solution of \eqref{eq:SubLE}.
Then
\[
	u(x)=\GF[u^q\,d\ms](x)+\GF[\pms](x)+H_\bv(x)
  \qtext{for } x\in\dom.
\]
Note from the maximum principle that
$H_\bv(x)\le \|\bv\|_\infty$ for $x\in\dom$.
If $\|u\|_\infty\ge1$, then
\begin{equation}\label{eq:upper-est-1}
\begin{split}
	\|u\|_\infty
  &\le
	\|u\|_\infty^q\|\GF[\ms]\|_\infty+\|\GF[\pms]\|_\infty+\|\bv\|_\infty  \\
  &\le
	\|u\|_\infty^q \left( \|\GF[\ms]\|_\infty+\frac{\|\GF[\pms]\|_\infty+\|\bv\|_\infty}{\|u\|_\infty^q} \right)  \\
  &\le
	\|u\|_\infty^q
	( \|\GF[\ms]\|_\infty+\|\GF[\pms]\|_\infty+\|\bv\|_\infty).
\end{split}
\end{equation}
Therefore we obtain the uniform estimate
$\|u\|_\infty\le\conref{c:ub}$.
Substituting this into
\[
	u(x) \le \|u\|_\infty^q \GF[\ms](x)+\GF[\pms](x)+H_\bv(x)
  \qtext{for } x\in\dom,
\]
we can get the upper estimate.
Thanks to Lemmas \ref{lem:LB-1} and \ref{lem:iterateGF},
we get the lower estimate
\begin{equation}\label{eq:lower-est-1}
\begin{split}
	u(x)
  &\ge
	(1-q)^{\frac{q}{1-q}} \GF[\GF[\ms]^{\frac{q}{1-q}}\,d\ms](x)
  +
	\GF[\pms](x) + H_\bv(x)  \\
  &\ge
	(1-q)^{\frac{q}{1-q}} (1-q) \GF[\ms](x)^{\frac{1}{1-q}}
  +
	\GF[\pms](x) + H_\bv(x),
\end{split}
\end{equation}
as desired.
\end{proof}

\section{Proof of Theorem \ref{thm:existence}}\label{sec:existence}

First, we give a proof of the necessity in Theorem \ref{thm:existence}.
For the sufficiency, we present two different proofs based on
the Schauder fixed point theorem and the method of successive approximations.

\subsection{A proof of the necessity}

\begin{proof}[Proof of Theorem \ref{thm:existence} (necessity)]
Suppose that \eqref{eq:SubLE} has a solution $u\in\CF(\cl)$.
Then $\GF[u^q\,d\ms]=u-\GF[\pms]-H_{\bv}$ on $\cl$.
Since the right hand side is upper semicontinuous on $\cl$,
we have $\GF[u^q\,d\ms],\GF[\pms]\in\CF(\cl)$.
Appealing to Lemma \ref{lem:cont-Kato}, the measures
$u^q\,d\ms$ and $\pms$ satisfy the $\GF$-Kato condition.
In order to show that $\ms$ satisfies the $\GF$-Kato condition,
we need an additional argument.
Let $z\in\cl$ and $r>0$.
For the sake of simplicity, we write
$\omega_r:=\ms|_{\dom\cap B(z,r)}$.
Using Lemmas \ref{lem:LB-1} and \ref{lem:iterateGF},
we get for all $x\in\dom$,
\begin{align*}
	\GF[u^q\,d\omega_r](x)
  &\ge
	(1-q)^{\frac{q}{1-q}}
	\GF[\GF[\ms]^{\frac{q}{1-q}}\,d\omega_r](x)  \\
  &\ge
	(1-q)^{\frac{q}{1-q}}
	\GF[\GF[\omega_r]^{\frac{q}{1-q}}\,d\omega_r](x)  \\
  &\ge
	(1-q)^{\frac{1}{1-q}}
	\GF[\omega_r](x)^{\frac{1}{1-q}}.
\end{align*}
Since $u^q\,d\ms$ satisfies the $\GF$-Kato condition as mentioned above,
we have
\[
	\lim_{r\to0+} \biggl( \sup_{x\in\dom} \GF[\omega_r](x) \biggr)=0.
\]
Therefore, by Lemma \ref{lem:equivKato},
$\ms$ satisfies the $\GF$-Kato condition as well.
\end{proof}

\subsection{A proof based on the Schauder fixed point theorem}\label{subsec:ex-1}

In the rest of this section,
we assume that
$\ms,\pms\in\RM$ satisfy the $\GF$-Kato condition.
We note from Lemma \ref{lem:bdd-pot} that
$\GF[\ms],\GF[\pms]\in\LS^\infty(\dom)$.

Taking Theorem \ref{thm:upper-lower-est} into account,
we consider the following function class.
Let 
\[
	w_0(x)
  :=
	(1-q)^{\frac{1}{1-q}}\GF[\ms](x)^{\frac{1}{1-q}}
  + \GF[\nu](x) + H_\bv(x)
  \qtext{for } x\in\dom.
\]
We see from 
\eqref{eq:cond-f-meas} and the minimum principle
that $w_0$ is positive on $\dom$.
With the constant $\conref{c:ub}$ defined by \eqref{eq:ub-def},
we let
\[
	\BC
  :=
	\left\{ w\in\CF(\cl) :
	w_0(x) \le w(x) \le \conref{c:ub}
	\text{ for } x\in\dom \right\},
\]
and consider the operator on $\BC$ defined by
\[
	T_\bv[w](x)
  :=
  \begin{cases}
	\GF[w^q\,d\ms](x) + \GF[\pms](x) + H_\bv(x) & \text{for } x\in\dom, \\
	\bv(x) & \text{for } x\in\bdy.
  \end{cases}
\]
We equip $\CF(\cl)$ with the uniform norm $\|\cdot\|_\infty$,
so that it is complete.
Also, we see that $\BC$ is closed and convex in $\CF(\cl)$.

\begin{lemma}\label{lem:FintoV}
$T_\bv[\BC]\subset \BC$.
\end{lemma}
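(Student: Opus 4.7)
The plan is to verify the three defining properties of $\BC$ for $T_\bv[w]$ when $w\in\BC$: namely that $T_\bv[w]\in\CF(\cl)$, that $T_\bv[w]\ge w_0$ on $\dom$, and that $T_\bv[w]\le\conref{c:ub}$ on $\dom$.

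First I would handle continuity. Since $w\le\conref{c:ub}$, the function $w^q$ is bounded by $\conref{c:ub}^q$ on $\dom$, so it belongs to the class $\Psi$ of Lemma \ref{lem:cont-Kato} with constant $\conref{c:equicont}=\conref{c:ub}^q$. Because $\ms$ satisfies the $\GF$-Kato condition, that lemma gives $\GF[w^q\,d\ms]\in\CF(\cl)$, and in fact $\GF[w^q\,d\ms]$ vanishes continuously on $\bdy$. The same lemma applied to $\pms$ (with $\psi\equiv1$) yields $\GF[\pms]\in\CF(\cl)$ vanishing on $\bdy$. Finally, $H_\bv$ is continuous in $\dom$ and tends to $\bv$ at each boundary point by the assumed regularity of $\dom$, so $H_\bv\in\CF(\cl)$. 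Adding these three pieces and noting that on $\bdy$ only $\bv$ survives shows $T_\bv[w]$ agrees with $\bv$ on $\bdy$ and is continuous on all of $\cl$.

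Next I would prove the lower bound. Using $w\ge w_0\ge(1-q)^{1/(1-q)}\GF[\ms]^{1/(1-q)}$ and monotonicity of $\psi\mapsto\GF[\psi\,d\ms]$, I get
\[
\GF[w^q\,d\ms](x)\ge(1-q)^{q/(1-q)}\GF[\GF[\ms]^{q/(1-q)}\,d\ms](x).
\]
Now I apply Lemma \ref{lem:iterateGF} with $s=1/(1-q)\ge1$, for which $s-1=q/(1-q)$, to obtain $\GF[\GF[\ms]^{q/(1-q)}\,d\ms](x)\ge(1-q)\GF[\ms](x)^{1/(1-q)}$. Combining these gives $\GF[w^q\,d\ms](x)\ge(1-q)^{1/(1-q)}\GF[\ms](x)^{1/(1-q)}$, so adding $\GF[\pms]+H_\bv$ yields $T_\bv[w](x)\ge w_0(x)$ on $\dom$.

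For the upper bound, I would use $w^q\le\conref{c:ub}^q$ and the maximum principle bound $H_\bv\le\|\bv\|_\infty$ to estimate
\[
T_\bv[w](x)\le\conref{c:ub}^q\|\GF[\ms]\|_\infty+\|\GF[\pms]\|_\infty+\|\bv\|_\infty.
\]
By definition $\conref{c:ub}\ge1$, hence $\conref{c:ub}^q\ge1$, and $\conref{c:ub}^{1-q}\ge\|\GF[\ms]\|_\infty+\|\GF[\pms]\|_\infty+\|\bv\|_\infty$ in both branches of the max in \eqref{eq:ub-def}. Therefore the right-hand side is bounded by $\conref{c:ub}^q\bigl(\|\GF[\ms]\|_\infty+\|\GF[\pms]\|_\infty+\|\bv\|_\infty\bigr)\le\conref{c:ub}^q\cdot\conref{c:ub}^{1-q}=\conref{c:ub}$. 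The main (minor) obstacle is threading the iterated inequality of Lemma \ref{lem:iterateGF} at the right exponent $s=1/(1-q)$ in the lower bound step; once that is observed, everything else is just bookkeeping with the two terms of the definition of $\conref{c:ub}$.
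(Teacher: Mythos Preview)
Your proposal is correct and follows essentially the same route as the paper's proof. The paper's version is simply more compressed: it invokes Lemma~\ref{lem:cont-Kato} and regularity of $\dom$ for the continuity of $T_\bv[w]$, then says the bounds $w_0\le T_\bv[w]\le\conref{c:ub}$ follow ``by repeating similar arguments to \eqref{eq:upper-est-1} and \eqref{eq:lower-est-1}''; you have spelled out exactly those arguments, including the application of Lemma~\ref{lem:iterateGF} with $s=1/(1-q)$ and the case analysis on $\conref{c:ub}$.
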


\begin{proof}
Let $w\in \BC$.
Since $\dom$ is regular for the Dirichlet problem,
we have $H_\bv\in\CF(\cl)$ and
$H_\bv(x)\to\bv(\xi)$ as $x\to\xi\in\bdy$.
This and Lemma \ref{lem:cont-Kato} derive $T_\bv[w]\in\CF(\cl)$.
Also, since
$w(x)\ge w_0(x)\ge (1-q)^{\frac{1}{1-q}}\GF[\ms](x)^{\frac{1}{1-q}}$,
we can obtain
$w_0(x)\le T_\bv[w](x)\le\conref{c:ub}$
for all $x\in\dom$
by repeating similar arguments to \eqref{eq:upper-est-1}
and \eqref{eq:lower-est-1}.
Thus the lemma follows.
\end{proof}

\begin{lemma}\label{lem:relcompact}
$T_\bv[\BC]$ is relatively compact in $\CF(\cl)$.
\end{lemma}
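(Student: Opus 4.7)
The plan is to invoke the Arzel\`a--Ascoli theorem on the compact space $\cl$. Since $\cl$ is compact (as a closed subset of the one-point compactification $\R^\di\cup\{\infty\}$), a subset of $\CF(\cl)$ is relatively compact if and only if it is uniformly bounded and equicontinuous on $\cl$. So I would reduce the claim to verifying these two properties for $T_\bv[\BC]$.

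Uniform boundedness is immediate from Lemma \ref{lem:FintoV}: every element of $T_\bv[\BC]$ lies pointwise between $w_0$ and the constant $\conref{c:ub}$, so $\|T_\bv[w]\|_\infty\le\conref{c:ub}$ for all $w\in\BC$. For equicontinuity, I would split
\[
	T_\bv[w](x)=\GF[w^q\,d\ms](x)+\GF[\pms](x)+H_\bv(x)
\]
into its three summands. The last two summands do not depend on $w$: $\GF[\pms]\in\CF(\cl)$ by Lemma \ref{lem:cont-Kato} applied to $\pms$, and $H_\bv\in\CF(\cl)$ by the regularity of $\dom$ for the Dirichlet problem together with the definition of $T_\bv$ on $\bdy$. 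A single continuous function on a compact set is automatically (trivially) an equicontinuous family, so these two pieces cause no trouble.

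The main work is therefore to show that the family $\{\GF[w^q\,d\ms]:w\in\BC\}$ is equicontinuous on $\cl$. Here the key observation is that for every $w\in\BC$ we have $0\le w^q\le\conref{c:ub}^q$ pointwise on $\dom$, so the functions $\psi:=w^q$ all belong to the set $\Psi=\{\psi\in\LS^\infty(\dom):0\le\psi\le\conref{c:ub}^q\text{ a.e.}\}$ appearing in Lemma \ref{lem:cont-Kato}(d) (with the choice $\conref{c:equicont}=\conref{c:ub}^q$). Since $\ms$ satisfies the $\GF$-Kato condition by our standing assumption, Lemma \ref{lem:cont-Kato} directly supplies equicontinuity of $\{\GF[\psi\,d\ms]:\psi\in\Psi\}$ on $\cl$, and moreover tells us that these potentials vanish continuously on $\bdy$, which is exactly what we need to get equicontinuity uniformly up to the boundary and at $\infty$.

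There is no substantial obstacle: the whole argument is essentially a matter of packaging the previously established lemmas. The only point requiring a little care is making sure equicontinuity is genuinely uniform on all of $\cl$ (not just on $\dom$), but this is precisely what Lemma \ref{lem:cont-Kato}(d) delivers, since it asserts equicontinuity on $\cl$ and uniform vanishing on $\bdy$. Once the three summands are handled, Arzel\`a--Ascoli concludes the proof.
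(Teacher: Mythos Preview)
Your proof is correct and follows exactly the paper's approach: uniform boundedness via Lemma~\ref{lem:FintoV}, equicontinuity via Lemma~\ref{lem:cont-Kato} (together with $H_\bv\in\CF(\cl)$), and then the Ascoli--Arzel\`a theorem. The paper's proof is simply a two-sentence version of what you have written out in detail.
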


\begin{proof}
Observe from $H_\bv\in\CF(\cl)$ and
Lemma \ref{lem:cont-Kato} that
$T_\bv[\BC]$ is equicontinuous on $\cl$.
Since $T_\bv[\BC]$ is uniformly bounded
by Lemma \ref{lem:FintoV},
this lemma follows from
the Ascoli--Arzel\'a theorem.
\end{proof}

\begin{lemma}\label{lem:cont-F}
$T_\bv$ is continuous on $\BC$.
\end{lemma}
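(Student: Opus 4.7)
The plan is to show that if $w_n \to w$ in $\BC$ with respect to $\|\cdot\|_\infty$, then $T_\bv[w_n] \to T_\bv[w]$ uniformly on $\cl$. Since the harmonic and Green potential of $\pms$ terms in $T_\bv$ do not depend on the argument, the only piece that has to be controlled is
\[
  T_\bv[w_n](x)-T_\bv[w](x)=\GF[(w_n^q-w^q)\,d\ms](x).
\]

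First I would invoke the elementary inequality $|a^q-b^q|\le|a-b|^q$ valid for all $a,b\ge 0$ and $0<q<1$. Applied pointwise on $\dom$, this gives
\[
  \|w_n^q-w^q\|_\infty\le\|w_n-w\|_\infty^q,
\]
which tends to $0$ since $w_n\to w$ uniformly. (Note that $w_n,w$ are nonnegative on $\cl$, so the inequality applies even though $w_0$ may vanish on $\bdy$.)

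Next I would bound the difference of the images by
\[
  |T_\bv[w_n](x)-T_\bv[w](x)|
  \le\|w_n^q-w^q\|_\infty\,\GF[\ms](x)
  \le\|w_n-w\|_\infty^q\,\|\GF[\ms]\|_\infty
  \qtext{for every } x\in\dom,
\]
and note that on $\bdy$ both sides equal $\bv(x)$ so the difference is $0$. Since $\|\GF[\ms]\|_\infty<+\infty$ by Lemma \ref{lem:bdd-pot} (because $\ms$ satisfies the $\GF$-Kato condition), the right-hand side is independent of $x$ and goes to $0$ as $n\to\infty$. This yields $\|T_\bv[w_n]-T_\bv[w]\|_\infty\to 0$, proving continuity of $T_\bv$ on $\BC$.

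There is no real obstacle here; the only subtlety is the sublinear exponent, which is exactly what the inequality $|a^q-b^q|\le|a-b|^q$ handles. The boundedness of $\GF[\ms]$ supplied by the $\GF$-Kato hypothesis, together with the elementary $q$-th power inequality, is what converts uniform convergence of $w_n$ into uniform convergence of $T_\bv[w_n]$.
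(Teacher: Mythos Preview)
Your proof is correct and essentially identical to the paper's: both use the elementary inequality $|a^q-b^q|\le|a-b|^q$ together with the bound $\|\GF[\ms]\|_\infty<+\infty$ to obtain $\|T_\bv[w_1]-T_\bv[w_2]\|_\infty\le\|\GF[\ms]\|_\infty\,\|w_1-w_2\|_\infty^q$. The paper states this estimate directly for arbitrary $w_1,w_2\in\BC$ rather than phrasing it via sequences, but the content is the same.
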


\begin{proof}
Let $w_1,w_2\in \BC$.
Using the inequality
\[
	|a^q-b^q| \le |a-b|^q
  \qtext{for all $a,b\ge0$,} 
\]
we have for all $x\in\dom$,
\begin{align*}
	|T_\bv[w_1](x)-T_\bv[w_2](x)|
  &\le
	\int_\dom \GF(x,y)|w_1(y)-w_2(y)|^q \, d\ms(y)  \\
  &\le
	\|\GF[\ms]\|_\infty \|w_1-w_2\|_\infty^q.
\end{align*}
Taking the supremum for $x\in\dom$ on the left hand side
yields the continuity of $T_\bv$ on $\BC$.
\end{proof}

\begin{proof}[Proof of Theorem \ref{thm:existence} (sufficiency)]
By virtue of Lemmas \ref{lem:FintoV}, \ref{lem:relcompact} and \ref{lem:cont-F},
we can apply the Schauder fixed point theorem
to get $u\in \BC$ satisfying
$T_\bv[u]=u$ on $\cl$.
Since $u(x)\ge w_0(x)>0$ for all $x\in\dom$,
this $u$ is the desired one.
\end{proof}

\subsection{A proof based on the method of successive approximations}\label{subsec:ex-2}

An idea of the following proof is based on
Verbitsky \cite{MR3792109} and
the second named author and Verbitsky \cite{MR3881877}.

\begin{proof}[Proof of Theorem \ref{thm:existence} (sufficiency)]
We define a sequence $\{u_j\}$ inductively by
\begin{align*}
	u_0(x)
  &:=
	(1-q)^{\frac{1}{1-q}} \GF[\ms](x)^{\frac{1}{1-q}},  \\
	u_{j}(x)
  &:=
	\GF[u_{j-1}^q\,d\ms](x)+\GF[\pms](x)+H_\bv(x)
  \qtext{for } j\in\N.
\end{align*}
Then $u_1>0$ in $\dom$ by \eqref{eq:cond-f-meas}.
Also, by Lemma \ref{lem:iterateGF}, we have for all $x\in\dom$,
\begin{align*}
	u_1(x)
  &\ge
	\GF[u_0^q\,d\ms](x)
  =
	(1-q)^{\frac{q}{1-q}} \GF[\GF[\ms]^{\frac{q}{1-q}}\,d\ms](x)  \\
  &\ge
	(1-q)^{\frac{q}{1-q}} (1-q) \GF[\ms](x)^{\frac{1}{1-q}}
=
	u_0(x).
\end{align*}
By induction, we see that 
$\{u_j\}$ is nondecreasing.
Also,
\[
	\|u_j\|_\infty
  \le
	\|u_{j-1}\|_\infty^q \|\GF[\ms]\|_\infty
  + \|\GF[\pms]\|_\infty + \|\bv\|_\infty
  \qtext{for } j\in\N.
\]
Since $\GF[\ms],\GF[\pms]\in\LS^\infty(\dom)$
by Lemma \ref{lem:bdd-pot},
this implies that $\{u_j\}\subset\LS^\infty(\dom)$ and
\[
	\|u_j\|_\infty
  \le
	\|u_j\|_\infty^q\|\GF[\ms]\|_\infty
  +
	\|\GF[\pms]\|_\infty + \|\bv\|_\infty,
\]
which derives $\|u_j\|_\infty\le\conref{c:ub}$
by the same way as in \eqref{eq:upper-est-1}.
Therefore $\{u_j\}$ converges pointwisely to a positive $u\in\LS^\infty(\dom)$.
The monotone convergence theorem shows that
$u(x)=\GF[u^q\,d\ms](x)+\GF[\pms](x)+H_\bv(x)$
for all $x\in\dom$,
which also yields $u\in\CF(\cl)$ and
$u=f$ on $\bdy$
by Lemma \ref{lem:cont-Kato}.
This completes the proof.
\end{proof}

\begin{remark}\label{rem:minimalsol}
The solution $u$ obtained in the above proof is minimal
in the sense that $u\le v$ in $\dom$
for any solution $v\in\CF(\cl)$ of \eqref{eq:SubLE}.
In fact, $u_0\le v$ in $\dom$ by Lemma \ref{lem:LB-1},
and, in view of the definition of $u_j$,
we can get inductively
that $u_j\le v$ in $\dom$.
\end{remark}

\section{Proof of Theorem \ref{thm:L^q-sol}}\label{sec:L^q-sol}

We first recall some basic notion of Sobolev spaces
(see \cite{MR2305115, MZ} for example). 
The Sobolev space $\Sob^{1,2}(\dom)$ consists of all functions
$u \in\LS^2(\dom)$ such that $\|\nabla u \|\in\LS^2(\dom)$,
where $\nabla u$ is the vector of weak partial derivatives
of $u$ of order 1.
The corresponding local space
$\Sob_{\rm loc}^{1,2}(\dom)$ is the set of all functions
$u$ in $\dom$ such that the restriction
$u|_{\Dom}\in\Sob^{1, 2}(\Dom)$
for every relatively compact open subset $\Dom$ of $\dom$. 
By $\Sob^{1,2}_0(\dom)$ and $\DS^{1,2}_0(\dom)$,
we denote the closures of
$\CF^{\infty}_0(\dom)$ with respect to the usual Sobolev norm
and the seminorm \eqref{eq:DS-norm},
respectively.
Then
$\Sob^{1,2}_0(\dom)\subset\DS^{1,2}_{0}(\dom)\subset \Sob_{\rm loc}^{1,2}(\dom)$.
It is known that
$u\in\Sob_{\rm loc}^{1,2}(\dom)$
has a quasicontinuous representative $\widetilde{u}$,
namely, $\widetilde{u}=u$ a.e. in $\dom$ and,
for each $\varepsilon>0$,
there is an open set $\mathcal{O}\subset\dom$
such that $\cp(\mathcal{O})<\varepsilon$ and the restriction 
$\widetilde{u}|_{\dom \setminus \mathcal{O}}$
is finite and continuous on 
$\dom\setminus \mathcal{O}$. 

Let $\DS^{-1,2}(\dom)$
denote the dual space to $\DS_{0}^{1,2}(\dom)$.
The following result is due to Brezis and Browder \cite{BrB} (cf. \cite[Theorem 2.39]{MZ}).

\begin{lemma}[\cite{BrB}]\label{lem:BreB} 
Let $\mu\in\DS^{-1,2}(\dom) \cap \RM$ and 
$u\in\DS_{0}^{1,2}(\dom)$.
Then $\widetilde{u}\in\LS^{1}(\dom,d\mu)$ and 
\[
	\langle \mu, u\rangle = \int_{\dom} \widetilde{u} \, d\mu.
\]
\end{lemma}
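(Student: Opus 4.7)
The plan is to reduce to the standard Brezis--Browder argument by combining three ingredients: the nonnegativity of $\mu$, the fact that a measure in $\DS^{-1,2}(\dom)$ cannot charge sets of zero $2$-capacity, and a truncation/approximation scheme that lets us pass between smooth test functions and the quasicontinuous representative $\widetilde{u}$.

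First, I would establish that $\mu$ charges no sets of zero $2$-capacity. Given a Borel set $E\subset\dom$ with $\cp(E)=0$, choose $\varphi_n\in\CF_0^\infty(\dom)$ with $\varphi_n\ge\chi_E$ and $\|\nabla\varphi_n\|_{\LS^2(\dom)}\to 0$; the bound $\mu(E)\le\int\varphi_n\,d\mu=\langle\mu,\varphi_n\rangle\le\|\mu\|_{\DS^{-1,2}}\|\nabla\varphi_n\|_{\LS^2(\dom)}$ then forces $\mu(E)=0$. As a consequence, any quasi-everywhere equality holds $\mu$-almost everywhere, so $\widetilde{u}$ is well defined $\mu$-a.e.\ and independent of the chosen quasicontinuous representative.

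Next I would treat bounded nonnegative $u\in\DS_0^{1,2}(\dom)$. Pick $\varphi_m\in\CF_0^\infty(\dom)$ with $\varphi_m\to u$ in $\DS_0^{1,2}(\dom)$, and after truncating at $\|u\|_\infty+1$ (a continuous operation on $\DS_0^{1,2}$) and passing to a subsequence, assume $|\varphi_m|\le\|u\|_\infty+1$ and $\varphi_m\to\widetilde{u}$ quasi-everywhere, hence $\mu$-a.e.\ by the previous step. For smooth compactly supported $\varphi_m$, the identity $\langle\mu,\varphi_m\rangle=\int_\dom\varphi_m\,d\mu$ is immediate. By the dominated convergence theorem on the right (the dominating function $\|u\|_\infty+1$ has finite $\mu$-mass on the effective support, which is controlled via another truncation and the capacity-zero property to handle $\dom$ globally) and continuity of the pairing on the left, one obtains $\langle\mu,u\rangle=\int_\dom\widetilde{u}\,d\mu$ for such $u$.

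Now extend to arbitrary nonnegative $u\in\DS_0^{1,2}(\dom)$ via the truncations $u_k:=\min(u,k)$. Since $\nabla u_k=\nabla u\cdot\chi_{\{u<k\}}$, dominated convergence gives $u_k\to u$ in $\DS_0^{1,2}(\dom)$, and $\widetilde{u_k}=\min(\widetilde{u},k)\uparrow\widetilde{u}$ quasi-everywhere, hence $\mu$-a.e. Applying the bounded case to each $u_k$ yields $\langle\mu,u_k\rangle=\int\widetilde{u_k}\,d\mu$. Letting $k\to\infty$, the left side tends to $\langle\mu,u\rangle$ by continuity, and the right side tends to $\int\widetilde{u}\,d\mu$ by the monotone convergence theorem (using $\mu\ge 0$); in particular $\widetilde{u}\in\LS^1(\dom,d\mu)$. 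Finally, for general $u\in\DS_0^{1,2}(\dom)$ decompose $u=u^+-u^-$ with $u^\pm\in\DS_0^{1,2}(\dom)$ and use linearity of both sides.

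The main obstacle I anticipate is ensuring the integrability needed to pass to the limit in the bounded case: the dominating constant is integrable only on compact subsets of $\dom$ a priori, so one must either localize carefully or use the truncation argument directly on the approximants $\varphi_m$ together with the fact that $\mu$ annihilates polar sets. Once that technicality is handled, the rest is a clean monotone-limit argument made possible by the sign condition $\mu\ge 0$.
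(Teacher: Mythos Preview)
The paper does not prove this lemma; it is quoted as a known result of Brezis and Browder \cite{BrB} (see also \cite[Theorem~2.39]{MZ}), so there is no ``paper's own proof'' to compare against. Your outline is essentially the classical Brezis--Browder argument, and its structure---capacity absolute continuity of $\mu$, reduction to bounded nonnegative $u$, then truncation $u_k=\min(u,k)$ and monotone convergence, then $u=u^+-u^-$---is correct. You have also correctly located the one genuine technical point: in the bounded case the constant dominator $\|u\|_\infty+1$ need not lie in $\LS^1(\dom,d\mu)$ because $\mu(\dom)$ may be infinite, so a bare appeal to dominated convergence fails. The standard fix is the localisation you allude to: multiply by a cutoff $\eta\in\CF_0^\infty(\dom)$ with $0\le\eta\le 1$, so that $|\eta\varphi_m|\le(\|u\|_\infty+1)\eta\in\LS^1(\dom,d\mu)$ (this uses only that $\mu$ is Radon), obtain $\langle\mu,\eta u\rangle=\int\eta\widetilde u\,d\mu$, and then let $\eta\uparrow 1$ using $\eta u\to u$ in $\DS_0^{1,2}(\dom)$ on the left and monotone convergence on the right. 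With that detail filled in, your argument is complete.
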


\begin{lemma}[{\cite[Lemma 5.4]{MR4048382}}]\label{lem:dual-DS}
Let $0<q<1$ and $\ms,\pms\in\RM$.
Suppose that there exists a positive supersolution
$u\in\LS^q_{\rm loc}(\dom,d\ms)\cap\DS^{1,2}_0(\dom)$ of
$-\Delta u=\ms u^q+\pms$ in $\dom$.
Then $-\Delta u\in\DS^{-1,2}(\dom)\cap\RM$ and
$\pms\in\DS^{-1,2}(\dom)$.
\end{lemma}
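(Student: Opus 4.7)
The plan is to first establish that $-\Delta u$ is a nonnegative Radon measure on $\dom$ which also lies in the dual Sobolev space $\DS^{-1,2}(\dom)$, and then to deduce $\pms\in\DS^{-1,2}(\dom)$ from the measure comparison $\pms\le -\Delta u$ by invoking the Brezis--Browder pairing (Lemma \ref{lem:BreB}).

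First I would interpret $-\Delta u$ as a distribution on $\dom$ via $\langle -\Delta u,\varphi\rangle=\int_\dom \nabla u\cdot\nabla\varphi\,dx$ for $\varphi\in\CF_0^\infty(\dom)$. Since $u\in\DS_0^{1,2}(\dom)$, the Cauchy--Schwarz inequality yields $|\langle -\Delta u,\varphi\rangle|\le\|u\|_{\DS_0^{1,2}(\dom)}\|\varphi\|_{\DS_0^{1,2}(\dom)}$, so by density this functional extends to a bounded linear functional on $\DS_0^{1,2}(\dom)$; hence $-\Delta u\in\DS^{-1,2}(\dom)$. Next, the supersolution hypothesis reads $-\Delta u\ge u^q\,d\ms+\pms$ in the distributional sense, which is well-posed because $u\in\LS^q_{\rm loc}(\dom,d\ms)$ makes $u^q\,d\ms$ a locally finite nonnegative Radon measure. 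Consequently, the distribution $\sigma:=-\Delta u-u^q\,d\ms-\pms$ pairs nonnegatively against nonnegative smooth compactly supported functions, and Schwartz's theorem that nonnegative distributions are Radon measures gives $\sigma\in\RM$. Writing $-\Delta u=u^q\,d\ms+\pms+\sigma$ then shows $-\Delta u\in\RM$, and we conclude $-\Delta u\in\DS^{-1,2}(\dom)\cap\RM$.

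Since $u^q\,d\ms\ge0$ and $\sigma\ge0$, the measure inequality $0\le\pms\le -\Delta u$ holds. To promote this to dual-space membership I would apply Lemma \ref{lem:BreB} to $-\Delta u$: for any nonnegative $v\in\DS_0^{1,2}(\dom)$ the quasicontinuous representative $\widetilde v$ lies in $\LS^1(\dom,d(-\Delta u))$ and hence, by domination, in $\LS^1(\dom,d\pms)$, with
\[
	\int_\dom \widetilde v\,d\pms
	\le\int_\dom \widetilde v\,d(-\Delta u)
	=\langle -\Delta u,v\rangle
	\le\|-\Delta u\|_{\DS^{-1,2}(\dom)}\,\|v\|_{\DS_0^{1,2}(\dom)}.
\]
Splitting a general $v\in\DS_0^{1,2}(\dom)$ as $v=v^+-v^-$ with $v^\pm\in\DS_0^{1,2}(\dom)$ and $\||v|\|_{\DS_0^{1,2}(\dom)}=\|v\|_{\DS_0^{1,2}(\dom)}$ extends the estimate to all of $\DS_0^{1,2}(\dom)$, so $\pms$ defines a bounded linear functional on $\DS_0^{1,2}(\dom)$, giving $\pms\in\DS^{-1,2}(\dom)$. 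The main technical point I anticipate is the step $-\Delta u\in\RM$: it rests on Schwartz's representation of nonnegative distributions, and implicitly requires the supersolution condition to be interpreted weakly so that $-\Delta u-u^q\,d\ms-\pms$ is a well-defined distribution on nonnegative $\varphi\in\CF_0^\infty(\dom)$, which is precisely what the joint hypotheses $u\in\DS_0^{1,2}(\dom)$ and $u\in\LS^q_{\rm loc}(\dom,d\ms)$ guarantee.
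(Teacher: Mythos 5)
The paper does not prove this lemma; it is quoted verbatim from \cite[Lemma 5.4]{MR4048382}, so there is no in-paper argument to compare against. Your proof is correct and follows the same standard route as the cited source: $-\Delta u\in\DS^{-1,2}(\dom)$ by Cauchy--Schwarz and density, $-\Delta u\ge u^q\,d\ms+\pms\ge 0$ so it is a nonnegative Radon measure by Schwartz's theorem, and the domination $0\le\pms\le-\Delta u$ is upgraded to $\pms\in\DS^{-1,2}(\dom)$ via the Brezis--Browder pairing applied to nonnegative elements of $\DS_0^{1,2}(\dom)$ (the only minor points left implicit, both standard, are that $v^{\pm}\in\DS_0^{1,2}(\dom)$ with $\||v|\|_{\DS_0^{1,2}(\dom)}=\|v\|_{\DS_0^{1,2}(\dom)}$, and that $\widetilde v\ge0$ quasi-everywhere when $v\ge0$ a.e., which suffices since $-\Delta u$, and hence $\pms$, charges no set of zero capacity).
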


\begin{lemma}\label{lem:uniqueness}
Assumptions are the same as in Theorem \ref{thm:L^q-sol}.
If $\msol,u\in\CF(\cl)\cap\LS^q(\dom,d\ms)$
are solutions of \eqref{eq:SubLE} such that
$\msol\le u$ in $\dom$,
then $\msol=u$ in $\dom$.
\end{lemma}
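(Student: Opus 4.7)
The plan is to exploit the symmetry of the Green function against the two integral equations. First I would multiply the identity
\[
  u = \GF[u^q\,d\ms] + \GF[\pms] + H_\bv
\]
by $\msol^q$ and integrate against $d\ms$, and symmetrically multiply the analogous identity for $\msol$ by $u^q$ and integrate against $d\ms$. Tonelli's theorem together with the symmetry $\GF(x,y)=\GF(y,x)$ yields
\[
  \int_\dom \GF[u^q\,d\ms]\,\msol^q\,d\ms = \int_\dom \GF[\msol^q\,d\ms]\,u^q\,d\ms,
\]
so subtracting the two resulting equations cancels the double Green integral and produces
\[
  \int_\dom (u\msol^q - \msol u^q)\,d\ms + \int_\dom (\GF[\pms] + H_\bv)(u^q - \msol^q)\,d\ms = 0.
\]
Before doing the subtraction I would verify that every integral in sight is finite: $u,\msol \in \CF(\cl)$ are bounded by Theorem \ref{thm:upper-lower-est}, $\GF[\pms]$ is bounded by Lemma \ref{lem:bdd-pot} (since $\pms$ is $\GF$-Kato), $H_\bv \le \|\bv\|_\infty$, and by hypothesis $u^q,\msol^q \in \LS^1(\dom,d\ms)$, so every term is controlled by a constant multiple of $\int(u^q+\msol^q)\,d\ms < \infty$.

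The point is that \emph{both} integrands on the left are pointwise nonnegative. For the second this is immediate from $u\ge \msol$ and $\GF[\pms]+H_\bv\ge 0$. For the first, on $\dom$ both $u$ and $\msol$ are strictly positive, so writing
\[
  u\msol^q - \msol u^q = u^q \msol^q\bigl(u^{1-q} - \msol^{1-q}\bigr)
\]
and using $1-q>0$ with $u\ge \msol$ gives $u\msol^q - \msol u^q \ge 0$. Since the sum of two nonnegative integrals is $0$, each integrand vanishes $\ms$-a.e.; in particular $u^{1-q}=\msol^{1-q}$ holds $\ms$-a.e., which forces $u = \msol$ $\ms$-a.e. on $\dom$.

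Finally, from $u^q = \msol^q$ $\ms$-a.e.\ we get $\GF[u^q\,d\ms] = \GF[\msol^q\,d\ms]$ pointwise on $\dom$, and feeding this back into the two integral equations immediately yields $u = \msol$ everywhere on $\dom$. There is no real obstacle: the only thing to be careful about is the Fubini step, which is justified purely by the $\LS^q(\dom,d\ms)$ hypothesis together with the uniform boundedness of $u$, $\msol$, $\GF[\pms]$, and $H_\bv$. I would not need any Sobolev regularity (e.g.\ $\DS^{1,2}_0$), nor Lemmas \ref{lem:BreB} or \ref{lem:dual-DS}, for this particular statement — those enter only when one wants to replace $\LS^q$ by an energy or $\LS^{1+q}$ class.
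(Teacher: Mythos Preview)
Your argument is correct, and it is genuinely different from---and considerably more elementary than---the paper's proof. The paper works through the weak formulation: it passes to relatively compact subdomains $D\Subset\dom$, subtracts Perron solutions to land in $\Sob^{1,2}_0(D)$, invokes Lemmas \ref{lem:BreB} and \ref{lem:dual-DS} to justify cross-testing, and then lets $D\uparrow\dom$ via dominated convergence. All of this machinery is there because $u$ and $\msol$ are only in $\Sob^{1,2}_{\rm loc}(\dom)$, so one cannot plug them directly into the global weak equation. You bypass this entirely by staying at the level of the integral equation \eqref{eq:int-eq} and using the symmetry $\GF(x,y)=\GF(y,x)$ together with Tonelli; the finiteness checks you list are exactly what is needed to subtract legitimately, and the factorisation $u\msol^q-\msol u^q=u^q\msol^q(u^{1-q}-\msol^{1-q})$ finishes the job. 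Your identity even retains the $\GF[\pms]$ contribution which the paper simply discards as having the right sign. The paper's route has the virtue of being the standard PDE template (and the Sobolev lemmas are needed anyway for the $\LS^{1+q}$ uniqueness in Theorem \ref{thm:L^q-sol}(ii)), but for the $\LS^q$ statement your Green-function argument is cleaner and requires strictly fewer tools. One cosmetic point: you cite Theorem \ref{thm:upper-lower-est} for the boundedness of $u,\msol$, but continuity on the compact set $\cl$ already gives that; the theorem is not needed.
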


\begin{proof}
When $\ms(\dom)=0$,
the equation in \eqref{eq:SubLE} becomes $-\Delta u=\pms$,
and therefore the uniqueness of \eqref{eq:SubLE} is well known.
Let us consider the case $\ms(\dom)>0$.
Since $\msol$ and $u$ are bounded and superharmonic in $\dom$,
it follows from \cite[Corollary 7.20]{MR2305115} that
$\msol,u\in\Sob_{\rm loc}^{1,2}(\dom)$ and
\begin{align}
	\int_\dom \nabla \msol\cdot\nabla\phi \, dx
  &=
	\int_\dom \msol^q\phi \, d\ms+\int_\dom \phi \, d\pms,  \label{eq:weak-u}\\
	\int_\dom \nabla u\cdot\nabla\phi \, dx
  &=
	\int_\dom u^q\phi \, d\ms+\int_\dom \phi \, d\pms  \label{eq:weak-v}
\end{align}
for all $\phi\in\CF_0^\infty(\dom)$.
Note also that $\msol=\bv=u$ on $\bdy$.

Let $\Dom$ be a relatively compact open subset of $\dom$
which is regular for the Dirichlet problem.
Then the restrictions
$\msol,u\in\Sob^{1,2}(\Dom)\cap\CF(\overline{\Dom})$.
Set
\[
	\vmsol := \msol-H_{\msol}^{\Dom}
  \quad \text{and} \quad
	w: = u-H_{u}^{\Dom},
\]
where $H^\Dom_u$ stands for the Perron--Wiener--Brelot solution of \eqref{eq:PWB}
with $\dom=\Dom$ and $f=u$.
Note that $\vmsol$ and $w$ depend on $\Dom$,
as we take limits of those as $\Dom\to\dom$ at the last step.
We see from \cite[Corollary 9.29]{MR2305115} that
$\vmsol,w\in\Sob_0^{1,2}(\Dom)\cap\CF(\overline{\Dom})$
and these are positive in $\Dom$ by the minimum principle.
Moreover, since
$\Delta(\vmsol-w)=\Delta(\msol-u)=\ms(u^q-\msol^q)\ge 0$
in $\Dom$ in the distributional sense,
we have $\vmsol\le w$ in $\Dom$ by the maximum principle.
Also, \eqref{eq:weak-u} and \eqref{eq:weak-v}
yield that for all $\phi\in\CF_0^\infty(\Dom)$,
\begin{align}
	\int_\Dom \nabla \vmsol\cdot\nabla\phi \, dx
  &=
	\int_\Dom \msol^q\phi \, d\ms
  +
	\int_\Dom \phi \, d\pms,  \label{eq:weak-u-2} \\
	\int_\Dom \nabla w\cdot\nabla\phi \, dx
  &=
	\int_\Dom u^q\phi \, d\ms
  +
	\int_\Dom \phi \, d\pms. \label{eq:weak-v-2}
\end{align}
By Lemma \ref{lem:dual-DS},
we see that Radon measures
$\msol^q\,d\ms$, $u^q\,d\ms$ and $\pms$ belong to
$\DS^{-1,2}(\Dom)$.
Thus Lemma \ref{lem:BreB} ensures that
\eqref{eq:weak-u-2} and \eqref{eq:weak-v-2} are valid for all
$\phi\in\DS_0^{1,2}(\Dom)\cap\CF(\Dom)$.
Applying \eqref{eq:weak-u-2} with $\phi=w$
and \eqref{eq:weak-v-2} with $\phi=\vmsol$, we have
\begin{align*}
	\int_\Dom \nabla\vmsol\cdot \nabla w \, dx
  &=
	\int_\Dom \msol^q w \, d\ms + \int_\Dom w \, d\pms,  \\
	\int_\Dom \nabla w\cdot \nabla\vmsol \, dx
  &=
	\int_\Dom u^q \vmsol \, d\ms + \int_\Dom \vmsol \, d\pms.
\end{align*}
Subtracting in each side, we get
\begin{equation}\label{eq:pr-unique-1}
	\int_\Dom (\msol^qw-u^q \vmsol) \, d\ms
  =
	\int_\Dom (\vmsol-w) \, d\pms \le 0.
\end{equation}
By the definitions of $\vmsol$ and $w$, we see that 
\begin{equation}\label{eq:pr-unique-5}
	\msol^q w-u^q \vmsol
  = 
	\msol^q u^q(u^{1-q} - \msol^{1-q}) 
  +
	H^{\Dom}_u (u^q-\msol^q) - u^q(H^{\Dom}_u - H^{\Dom}_{\msol}) \qtext{in } \Dom.
\end{equation}
Thus, by \eqref{eq:pr-unique-1} and \eqref{eq:pr-unique-5},
\[
	\int_{\Dom} \msol^q u^q(u^{1-q} - \msol^{1-q})  \, d\ms
  +
	\int_{\Dom} H^{\Dom}_u (u^q-\msol^q) \, d\ms
  \le
	\int_{\Dom} u^q (H^{\Dom}_u-H^{\Dom}_{\msol}) \,d\ms.
\]
Note that
$0\le H^{\Dom}_u(u^q-\msol^q)\le\|u\|_\infty u^q$ and
$0\le u^q(H^{\Dom}_u-H^{\Dom}_{\msol})\le\|u-\msol\|_\infty u^q$
in $\Dom$, and $u^q\in\LS^1(\dom,d\ms)$ by assumption.
Since
$H_{\msol}^{\Dom} \to H_{\msol}=H_{\bv}$ and
$H_{u}^{\Dom} \to H_{u}=H_{\bv}$
as $\Dom$ expands to $\dom$
(see \cite[Theorem 6.3.10]{MR1801253}),
it follows from
the monotone convergence and the Lebesgue dominated convergence
theorems that
\[
	\int_{\dom} \msol^q u^q (u^{1-q} - \msol^{1-q}) \,d\ms  
  +
	\int_{\dom} H_{\bv}(u^q-\msol^q)\, d\ms
  \le  0.
\]
Both integrands on the left hand side are nonnegative. 
Therefore $u=\msol$ $\ms$-a.e. in $\dom$.
By the integral representations \eqref{eq:int-eq} of $\msol$ and $u$,
we conclude that $u=\msol$ in $\dom$.
This completes the proof.
\end{proof}

We need the following characterization of weighted norm inequalities
established by Verbitsky.

\begin{lemma}[{\cite[Theorem 1.1]{MR3792109}}]\label{lem:normineq}
Let $p>1$, let $0<r<p$ and let $\ms\in\RM$.
Then there exists a constant $\con>0$ such that
\[
	\|\GF[\psi \, d\ms]\|_{\LS^r(\dom,d\ms)}
  \le
	\con \|\psi\|_{\LS^p(\dom,d\ms)}
  \qtext{for } \psi\in\LS^p(\dom,d\ms)
\]
if and only if $\GF[\ms]\in\LS^{\frac{pr}{p-r}}(\dom,d\ms)$.
\end{lemma}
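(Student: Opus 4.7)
The plan is to prove necessity and sufficiency separately, the linchpin being the exponent $s:=pr/(p-r)$, which satisfies the identities $\tfrac{1}{r}-\tfrac{1}{p}=\tfrac{1}{s}$ and $\tfrac{s}{r}=\tfrac{s}{p}+1=\tfrac{p}{p-r}\ge 1$.

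For necessity, I would test the hypothesized norm inequality against $\psi=\GF[\ms]^{s/p}$, after a standard truncation (replace $\ms$ by $\ms_k:=\chi_{A_k}\ms$ for compact $A_k\uparrow\dom$, and cut at a suitable height) to guarantee $\LS^p(\dom,d\ms)$-membership. Setting $\psi_k:=\GF[\ms_k]^{s/p}$ and applying Lemma \ref{lem:iterateGF} to $\ms_k$ with exponent $s/r\ge 1$ (using $s/r-1=s/p$), together with monotonicity in the measure, yields the pointwise lower bound
\[
\GF[\psi_k\,d\ms]\ge \GF[\psi_k\,d\ms_k]\ge (r/s)\,\GF[\ms_k]^{s/r}.
\]
Raising to the $r$-th power, integrating against $d\ms$, and invoking the hypothesized inequality $\|\GF[\psi_k\,d\ms]\|_{\LS^r(\dom,d\ms)}\le\con\|\psi_k\|_{\LS^p(\dom,d\ms)}$ produces
\[
\int_\dom\GF[\ms_k]^s\,d\ms\le\left(\frac{s\con}{r}\right)^r\left(\int_\dom\GF[\ms_k]^s\,d\ms\right)^{r/p}.
\]
Since $1-r/p=r/s$, simple algebra gives the uniform bound $\|\GF[\ms_k]\|_{\LS^s(\dom,d\ms)}\le s\con/r$, and monotone convergence as $k\to\infty$ completes this direction.

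For sufficiency when $r\ge 1$, I would assume $\GF[\ms]\in\LS^s(\dom,d\ms)$ and apply Lemma \ref{lem:iterateGF} to the positive Radon measure $\psi\,d\ms$ with exponent $r$:
\[
\GF[\psi\,d\ms]^r\le r\,\GF\bigl[\GF[\psi\,d\ms]^{r-1}\psi\,d\ms\bigr].
\]
Integrating against $d\ms$, Fubini together with the symmetry $\int\GF(x,y)\,d\ms(x)=\GF[\ms](y)$ yields
\[
\int_\dom\GF[\psi\,d\ms]^r\,d\ms\le r\int_\dom\GF[\psi\,d\ms]^{r-1}\psi\,\GF[\ms]\,d\ms.
\]
Two successive H\"older inequalities---first with exponents $r/(r-1)$ and $r$ to factor off $\|\GF[\psi\,d\ms]\|_{\LS^r(\dom,d\ms)}^{r-1}$, then with exponents $p/r$ and $p/(p-r)$ to split $\|\psi\,\GF[\ms]\|_{\LS^r(\dom,d\ms)}$---deliver the claim with $\con\le r\|\GF[\ms]\|_{\LS^s(\dom,d\ms)}$. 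The cancellation is justified by first truncating $\psi$ to make $\|\GF[\psi\,d\ms]\|_{\LS^r(\dom,d\ms)}$ finite and then passing to the limit. The degenerate case $r=1$ falls out immediately: here Lemma \ref{lem:iterateGF} is trivial, $s=p'$, and Fubini plus H\"older give $\int\GF[\psi\,d\ms]\,d\ms=\int\GF[\ms]\,\psi\,d\ms\le\|\GF[\ms]\|_{\LS^{p'}(\dom,d\ms)}\|\psi\|_{\LS^p(\dom,d\ms)}$.

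The main obstacle will be sufficiency in the range $0<r<1$, where the H\"older pairing $\LS^r/\LS^{r/(r-1)}$ breaks down and the cancellation trick above is no longer available. My plan is to reach this range by a real-interpolation (Marcinkiewicz-type) argument between the $\LS^p\to\LS^1$ endpoint secured above and a weak-type $(p,r)$ estimate, which in turn can be extracted via a Sawyer-style testing condition from the quasi-metric structure of $\GF$ together with $\GF[\ms]\in\LS^s(\dom,d\ms)$. Alternatively, one may appeal directly to the two-weight testing characterizations for positive integral operators developed in \cite{MR3792109}, where this low-exponent range is handled by a dyadic decomposition adapted to the Green kernel.
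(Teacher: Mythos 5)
First, a point of reference: the paper does not prove this lemma at all --- it is imported verbatim as \cite[Theorem 1.1]{MR3792109} --- so there is no internal proof to compare yours with, and your proposal has to stand on its own. Judged that way, it is sound for the necessity and for the sufficiency when $r\ge 1$, but it has a genuine gap in the range $0<r<1$. The necessity argument (testing with $\psi=\GF[\ms]^{s/p}$ after truncation and invoking Lemma \ref{lem:iterateGF} with exponent $s/r=p/(p-r)$) is exactly the standard mechanism and your exponent bookkeeping is correct; the only care needed is that the bootstrap $A\le(s\con/r)^rA^{r/p}$ presupposes $A<+\infty$, so the truncation must also make $\int_\dom\GF[\ms_k]^s\,d\ms$ finite --- restricting $\ms$ to compacta does not achieve this when $\ms(\dom)=+\infty$, and one should in addition run the left-hand integral over an exhaustion of finite $\ms$-measure before passing to the limit. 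The sufficiency for $r\ge1$ via Lemma \ref{lem:iterateGF} applied to the measure $\psi\,d\ms$, Fubini, symmetry of $\GF$, and two H\"older inequalities is correct and does yield the constant $r\|\GF[\ms]\|_{\LS^{s}(\dom,d\ms)}$.

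The gap is the case $0<r<1$, and your proposed repair cannot work as stated. The $\LS^p\to\LS^1$ endpoint you want to interpolate from is equivalent (by duality, or by the theorem itself with $r=1$) to $\GF[\ms]\in\LS^{p'}(\dom,d\ms)$, whereas the hypothesis in force is $\GF[\ms]\in\LS^{s}(\dom,d\ms)$ with $s=pr/(p-r)<p'$ precisely when $r<1$; so the endpoint estimate is simply not available under the assumption, and no interpolation can start. The fallback of ``appealing directly to the testing characterizations developed in \cite{MR3792109}'' is circular, since the statement being proved \emph{is} Theorem 1.1 of that paper. The actual proof of sufficiency in the full range $0<r<p$ is a weighted Schur-test argument: apply H\"older pointwise inside the kernel with the weight $(\GF[\ms])^{1/p}$, which reduces matters to an upper bound of the form $\GF[(\GF[\ms])^{p'-1}\,d\ms]\lesssim(\GF[\ms])^{p'}$ --- the \emph{reverse} of Lemma \ref{lem:iterateGF}, valid because the Green kernel satisfies the weak maximum principle and a quasi-metric inequality. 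That ingredient is entirely absent from your sketch. Note finally that $r<1$ is not a negligible corner case here: the paper applies the lemma with $r=\gamma$ and $p=\gamma/q$ for arbitrary $\gamma>q$, so the range $q<\gamma<1$ genuinely occurs.
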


We are now ready to prove Theorem \ref{thm:L^q-sol}.

\begin{proof}[Proof of Theorem \ref{thm:L^q-sol}]
The necessity follows immediately from \eqref{eq:int-eq} and
the lower estimate in Theorem \ref{thm:upper-lower-est}.
Let us show the sufficiency.
As in the proof of Theorem \ref{thm:existence}
in Section \ref{subsec:ex-2},
we construct a nondecreasing sequence $\{u_j\}$
converging to a solution $u\in\CF(\cl)$ of \eqref{eq:SubLE}.
We claim that $\{u_j\}\subset\LS^\gamma(\dom,d\ms)$.
In fact, $u_0\in\LS^\gamma(\dom,d\ms)$ by assumption.
If $u_{j-1}\in\LS^\gamma(\dom,d\ms)$, then
Lemma \ref{lem:normineq} with $r=\gamma$ and $p=\frac{\gamma}{q}$ gives
\[
\begin{split}
	\|u_j\|_{\LS^\gamma(\dom,d\ms)}
  &\le
	\|\GF[u_{j-1}^q\,d\ms]\|_{\LS^\gamma(\dom,d\ms)}
  +
	\|\GF[\pms]+H_{\bv}\|_{\LS^\gamma(\dom,d\ms)}  \\
  &\lesssim
	\|u_{j-1}\|_{\LS^\gamma(\dom,d\ms)}^q
  +
	\|\GF[\pms]+H_{\bv}\|_{\LS^\gamma(\dom,d\ms)}<+\infty.
\end{split}
\]
Thus the claim follows by induction.
Moreover, since $u_{j-1}\le u_j$, we have in the same manner as in \eqref{eq:upper-est-1}
\[
	\|u_j\|_{\LS^\gamma(\dom,d\ms)}
  \lesssim
	(1+\|\GF[\pms]+H_{\bv}\|_{\LS^\gamma(\dom,d\ms)})^{\frac{1}{1-q}}
  <+\infty.
\]
Letting $j\to+\infty$, we get $u\in\LS^\gamma(\dom,d\ms)$,
as desired.

Finally, we show the uniqueness.
As mentioned in Remark \ref{rem:minimalsol},
problem \eqref{eq:SubLE} has a minimal solution
$\msol\in\CF(\cl)\cap\LS^\gamma(\dom,d\ms)$.
Let
$\msol\in\CF(\cl)\cap\LS^\gamma(\dom,d\ms)$
be any solution of \eqref{eq:SubLE}.
Then $\msol\le u$ in $\dom$.
If $\gamma=q$, then
$\msol=u$ in $\dom$ by Lemma \ref{lem:uniqueness}.
On the other hand, if $\gamma=q+1$ and $\bv\equiv 0$, then
$\msol,u\in\DS^{1,2}_0(\dom)$
by \cite[Theorem 1.1]{MR3881877}.
Therefore
$\msol=u$ in $\dom$
by their continuity and \cite[Theorem 6.3]{MR4048382}.
This completes the proof.
\end{proof}

\section{Remarks}\label{sec:remark}

In this section, we suppose that the dimension $\di\ge3$.

\subsection{H\"older continuous solution}

Let $0<\alpha<1$.
By $\CF^{0,\alpha}(\dom)$,
we denote the set of all $\alpha$-H\"older continuous functions in $\dom$.
Also, $\CF^{0,\alpha}_{\rm loc}(\dom)$
stands for the set of all functions $u\in\CF^{0,\alpha}(E)$
for every compact subset $E$ of $\dom$.

\begin{corollary}
Let $0<q<1$, let $\bv\in\CF_+(\bdy)$ and let
$\ms,\pms\in\RM$ satisfy
\eqref{eq:katocond-2} and \eqref{eq:cond-f-meas}.
Assume that there exist $\alpha\in(0,1)$ and $\conlabel{c:holder}>0$
such that the inequality
\begin{equation}\label{eq:mu<r}
	\omega(B(x,r)\cap\dom) \le \conref{c:holder} r^{\di-2+\alpha}
  \qtext{for all } x\in\dom \text{ and } r>0
\end{equation}
holds for both of $\omega=\ms$ and $\omega=\pms$.
Then there exists a solution
$u\in\CF_{\rm loc}^{0,\alpha}(\dom)\cap\CF(\cl)$ of
\eqref{eq:SubLE}.
\end{corollary}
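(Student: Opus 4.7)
The plan is to first bootstrap \eqref{eq:mu<r} to the full $\GF$-Kato condition so that Theorem~\ref{thm:existence} supplies a bounded continuous solution, and then to upgrade it to local H\"older regularity via a Morrey-type estimate for Green potentials in the representation \eqref{eq:int-eq}.

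First I would verify that \eqref{eq:mu<r} implies \eqref{eq:katocond-1} for both $\ms$ and $\pms$. Using $\GF(x,y)\le c_\di|x-y|^{2-\di}$ and the dyadic decomposition of $B(x,r)\cap\dom$ into shells $\{2^{-k-1}r\le|y-x|<2^{-k}r\}$,
\[
  \int_{B(x,r)\cap\dom}\GF(x,y)\,d\omega(y)
  \le C\sum_{k=0}^{\infty}(2^{-k-1}r)^{2-\di}(2^{-k}r)^{\di-2+\alpha}
  = C'r^{\alpha},
\]
uniformly in $x\in\dom$. Together with the assumed \eqref{eq:katocond-2}, both $\ms$ and $\pms$ satisfy the $\GF$-Kato condition, and Theorem~\ref{thm:existence} produces a solution $u\in\CF(\cl)$, which Theorem~\ref{thm:upper-lower-est} bounds uniformly by $\conref{c:ub}$.

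Because $H_\bv$ is harmonic in $\dom$ and therefore smooth, and because the bound on $u$ implies that $u^q\,d\ms$ still satisfies \eqref{eq:mu<r} with $\conref{c:holder}$ replaced by $\conref{c:ub}^q\conref{c:holder}$, the proof reduces to the auxiliary claim: every $\omega\in\RM$ obeying \eqref{eq:mu<r} has $\GF[\omega]\in\CF^{0,\alpha}_{\rm loc}(\dom)$. To prove this I would fix a compact $E\subset\dom$, set $\delta:=\mathrm{dist}(E,\partial\dom)>0$, and take $x,z\in E$ with $\rho:=|x-z|<\delta/4$. Splitting the integral for $|\GF[\omega](x)-\GF[\omega](z)|$ at the radii $2\rho$ and $2\delta$ gives three contributions. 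The near part over $B(x,2\rho)\cup B(z,2\rho)$ is dominated by the displayed bound above and gives $O(\rho^\alpha)$. For the middle part $2\rho<|x-y|\le 2\delta$, interior gradient estimates applied to $\GF(\cdot,y)$ on balls $B(x,|x-y|/2)\subset\dom$ give $|\GF(x,y)-\GF(z,y)|\le C\rho|x-y|^{1-\di}$, and a dyadic sum over annuli of radii $2^k\rho$ has geometric ratio $2^{\alpha-1}<1$ and so yields at most $C'\rho^{\alpha}$. For the far part $|x-y|>2\delta$, Harnack applied on $B(x,\delta)\subset\dom$ gives $|\GF(x,y)-\GF(z,y)|\le C\rho\delta^{-1}\GF(x,y)$, and integrating yields $C\rho\delta^{-1}\|\GF[\omega]\|_\infty\le C''\rho\le C''\rho^\alpha$ for $\rho<1$.

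The chief technical point is the convergence of the middle dyadic sum: the geometric decay factor $2^{k(\alpha-1)}$ with $\alpha<1$ is exactly what the Morrey exponent $\di-2+\alpha$ in \eqref{eq:mu<r} is tuned to produce, and this determines the H\"older exponent in the conclusion. The far part works on an unbounded $\dom$ because $\GF[\omega]\in\LS^\infty(\dom)$ by Lemma~\ref{lem:bdd-pot} once the Kato condition holds, so no further decay hypothesis beyond the already-assumed \eqref{eq:katocond-2} is needed.
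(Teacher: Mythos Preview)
Your argument is correct.  The reduction to the $\GF$-Kato condition via \eqref{eq:mu<r} is the same as the paper's (they use the layer--cake identity
\[
  \int_{B(x,r)}\|x-y\|^{2-\di}\,d\omega(y)=r^{2-\di}\omega(B(x,r))+(\di-2)\int_0^r t^{1-\di}\omega(B(x,t))\,dt
\]
in place of your dyadic shells, but the computations are interchangeable), and both proofs then invoke Theorem~\ref{thm:existence} to obtain a bounded $u\in\CF(\cl)$.

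The difference lies in the regularity step.  The paper does not estimate $\GF[\omega]$ directly; instead it passes to a relatively compact subdomain $\Dom\Subset\dom$, writes the local Riesz decomposition $u=h+G_{\R^\di}[\omega]$ on $\Dom$ with $\omega=u^q\ms|_\Dom+\pms|_\Dom$ and $h$ harmonic, checks that this compactly supported $\omega$ inherits the Morrey bound \eqref{eq:mu<r}, and then quotes an external result (\cite[Lemma 6.1]{MR2783397}) asserting $G_{\R^\di}[\omega]\in\CF^{0,\alpha}(\overline{\Dom})$.  Your route stays with the representation \eqref{eq:int-eq} and the Green function of $\dom$ itself, and proves the H\"older estimate for $\GF[\omega]$ by hand via the near/middle/far split.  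What the paper's approach buys is brevity: restricting $\omega$ to a compact set makes the ``far'' contribution disappear and reduces matters to a known lemma about Newtonian potentials.  What your approach buys is self-containment---no external citation is needed---and it makes transparent why the exponent must be $\alpha$ (the geometric ratio $2^{\alpha-1}$ in the middle sum).  Your handling of the far piece via $\|\GF[\omega]\|_\infty<\infty$ (Lemma~\ref{lem:bdd-pot}) is a legitimate substitute for the paper's compact truncation; note only that the resulting H\"older constant depends on $\delta=\mathrm{dist}(E,\partial\dom)$, which is exactly what ``$\CF^{0,\alpha}_{\rm loc}$'' allows.
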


\begin{proof}
Using the formula:
\[
	\int_{B(x,r)}\|x-y\|^{2-\di} \, d\omega(y)
  =
	r^{2-\di} \omega(B(x,r))
  + (\di-2) \int_0^r t^{1-\di} \omega(B(x,t)) \, dt,
\]
and $\GF(x,y)\lesssim\|x-y\|^{2-\di}$,
we see from \eqref{eq:mu<r} that
$\ms$ and $\pms$ satisfy \eqref{eq:katocond-1}.
By Theorem \ref{thm:existence},
there exists a solution $u\in\CF(\cl)$ of \eqref{eq:SubLE}.
Let $\Dom$ be a relatively compact open subset of $\dom$
and let $\omega:=u^q \ms|_{\Dom}+\pms|_{\Dom}$.
Then the local Riesz decomposition theorem for superharmonic functions shows that
there exists a positive harmonic function $h$ on $\Dom$ such that
$u(x)=h(x)+G_{\R^\di}[\omega](x)$ for all $x\in\Dom$.
By the way, we have for all $x\in \R^\di$ and $r>0$,
\[
	\omega(B(x,r))
  \le
	\|u\|_{\infty}^q \ms(B(x,r)\cap\Dom)
	+ \pms(B(x,r)\cap\Dom)
  \le
	\con  r^{\di-2+\alpha},
\]
where a constant $\con$ depends only on
$\conref{c:holder}$, $q$, $f$, $\ms$, $\pms$.
Therefore it follows from \cite[Lemma 6.1]{MR2783397} that
$G_{\R^\di}[\omega]\in\CF^{0,\alpha}(\overline{\Dom})$,
and so $u\in\CF_{\rm loc}^{0,\alpha}(\Dom)$.
\end{proof}

\subsection{Solution whose Dirichlet integral diverges}

Recall that
if $\dom$ is a bounded Lipschitz domain in $\R^\di$,
then there exist positive constants $\con$ and $\beta\le1$
such that
\begin{equation}\label{eq:Green-est}
	\GF(x,y)
  \le
	\con
	\left( \frac{\dist(x)}{\|x-y\|} \right)^\beta
	\left( \frac{\dist(y)}{\|x-y\|} \right)^\beta \|x-y\|^{2-\di}
  \qtext{for all } x,y\in\dom.
\end{equation}
See \cite[Section 2]{MR3877278} for example.
If $\dom$ has a $\CF^{1,1}$-boundary,
then we can take $\beta=1$.
For $\alpha\in\R$, let
\[
	d\ms_\alpha(x) := \dist(x)^{-\alpha}\, dx.
\]

\begin{lemma}\label{lem:kato-1}
Let $\dom$ be a bounded Lipschitz domain in $\R^\di$ $(\di\ge3)$
and let $\beta$ be as above.
If $\alpha<1+\beta$, then
$\mu_\alpha$ satisfies the $\GF$-Kato condition.
\end{lemma}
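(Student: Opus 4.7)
Since $\dom$ is bounded, \eqref{eq:katocond-2} is vacuous, so the plan is to verify \eqref{eq:katocond-1}. I would first establish the uniform estimate
\[
G_\dom(x,y) \;\lesssim\; \delta_\dom(y)^{\beta}\,\|x-y\|^{2-\di-\beta} \qtext{for all } x,y\in\dom.
\]
When $\|x-y\|\le\delta_\dom(x)/2$, one has $\delta_\dom(y)\ge\delta_\dom(x)/2\ge\|x-y\|$, so the classical bound $G_\dom(x,y)\le c\|x-y\|^{2-\di}$ combines with $\|x-y\|^{\beta}\le\delta_\dom(y)^{\beta}$ to give the claim. When $\|x-y\|>\delta_\dom(x)/2$, the factor $(\delta_\dom(x)/\|x-y\|)^{\beta}\le 2^{\beta}$ in \eqref{eq:Green-est} is absorbed into a constant and again gives the claim.

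Next I would invoke the standard tubular-neighborhood estimate for Lipschitz boundaries,
\[
\bigl|\{y\in B(x,R)\cap\dom:\delta_\dom(y)<t\}\bigr|\;\lesssim\;\min(tR^{\di-1},\,R^{\di}),
\]
and integrate by layers via the layer-cake formula to obtain
\[
\int_{B(x,R)\cap\dom} \delta_\dom(y)^{\beta-\alpha}\,dy \;\lesssim\; R^{\di-\max(\alpha-\beta,\,0)}.
\]
The hypothesis $\alpha<1+\beta$ is precisely what makes $\alpha-\beta<1$, and hence the above weighted integral finite.

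Finally I would decompose $B(x,r)\cap\dom$ into dyadic annuli $2^{-j-1}r<\|x-y\|\le 2^{-j}r$, $j\ge 0$. Each annulus contributes at most
\[
(2^{-j}r)^{2-\di-\beta}\cdot(2^{-j}r)^{\di-\max(\alpha-\beta,\,0)}=(2^{-j}r)^{\min(2-\beta,\,2-\alpha)}.
\]
Since $\beta\le 1$ and $\alpha<1+\beta\le 2$, both $2-\beta$ and $2-\alpha$ are strictly positive, so the geometric series in $j$ converges and
\[
\int_{B(x,r)\cap\dom}G_\dom(x,y)\,d\ms_\alpha(y)\;\lesssim\;r^{\min(2-\beta,\,2-\alpha)}\;\longrightarrow\;0\qtext{as } r\to 0+,
\]
uniformly in $x\in\dom$, which verifies \eqref{eq:katocond-1}. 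The main subtlety is the range $\beta<\alpha<1+\beta$, where $\delta_\dom^{\beta-\alpha}$ is singular at $\partial\dom$; the Lipschitz tubular estimate is exactly what is needed to control this singular weight against the $\|x-y\|^{2-\di-\beta}$ factor in the Green function bound.
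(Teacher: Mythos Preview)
Your proof is correct and rests on the same three ingredients as the paper's: the Green estimate \eqref{eq:Green-est}, the weighted volume bound $\int_{\dom\cap B(\xi,R)}\dist(y)^{\tau}\,dy\lesssim R^{\di+\tau}$ for $\tau>-1$ (your tubular estimate is the layer-cake form of this), and a dyadic decomposition. The organization differs: the paper first splits at $\rho(x)=\min\{r,\dist(x)/2\}$, handling the inner ball via $\GF(x,y)\lesssim\|x-y\|^{2-\di}$ with $\dist(y)\approx\dist(x)$, and the outer region via \emph{outward} dyadic shells $2^{k-1}\dist(x)\le\|x-y\|<2^{k}\dist(x)$ translated to a nearest boundary point; you instead prove the uniform one-sided bound $\GF(x,y)\lesssim\dist(y)^{\beta}\|x-y\|^{2-\di-\beta}$ once and sum \emph{inward} dyadic shells $2^{-j-1}r<\|x-y\|\le 2^{-j}r$. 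Your streamlining avoids the case split at the cost of a slightly weaker rate, $r^{\min(2-\beta,\,2-\alpha)}$ versus the paper's $r^{2-\alpha}$ (or $r^{2}$ when $\alpha\le 0$), but this is immaterial for verifying \eqref{eq:katocond-1}.
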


\begin{proof}
Let $x\in\dom$ and $r>0$.
Put $\rho(x):=\min\{r,\dist(x)/2\}$.
Then
\begin{align*}
	&\int_{\dom\cap B(x,r)} \GF(x,y) \, d\ms_\alpha(y)  \\
  &\qquad=
	\int_{\dom\cap B(x,\rho(x))}\GF(x,y) \dist(y)^{-\alpha} \, dy
  +
	\int_{\dom\cap B(x,r)\setminus B(x,\rho(x))}\GF(x,y) \dist(y)^{-\alpha} \, dy  \\
  &\qquad=:
	I_1+I_2.
\end{align*}
If $\rho(x)=r$, then $I_2=0$ and
\[
	I_1
  \lesssim
	\int_{\dom\cap B(x,\rho(x))}\|x-y\|^{2-\di} \dist(y)^{-\alpha} \, dy
  \lesssim
	\dist(x)^{-\alpha} r^2
  \lesssim
  \begin{cases}
	r^2 & \text{if } \alpha\le 0,  \\
	r^{2-\alpha} & \text{if } \alpha>0.
  \end{cases}
\]
We consider the case $\rho(x)=\dist(x)/2$.
Then
$I_1\lesssim\dist(x)^{2-\alpha}\lesssim r^{2-\alpha}$.
Let $N$ be the smallest natural number such that
$r\le 2^N\dist(x)$
and take $\eta\in\partial\dom$ so that
$\|\eta-x\|=\dist(x)$.
Then $B(x,2^k\dist(x))\subset B(\eta,2^{k+1}\dist(x))$
for every integer $k\ge0$.
Recall that if $\tau>-1$, then
\[
	\int_{\dom\cap B(\xi,R)} \dist(x)^{\tau} \, dx
  \lesssim
	R^{\di+\tau}
  \qtext{for } \xi\in\partial\dom \text{ and } R>0.
\]
These, together with \eqref{eq:Green-est}, yield that
\begin{align*}
	I_2
  &\lesssim
	\int_{\dom\cap B(x,r)\setminus B(x,\rho(x))} \|x-y\|^{2-\di-\beta} \dist(y)^{\beta-\alpha} \, dy  \\
  &\lesssim
	\sum_{k=0}^N (2^k\dist(x))^{2-\di-\beta}
	\int_{\dom\cap B(x,2^k\dist(x))\setminus B(x,2^{k-1}\dist(x))}
	\dist(y)^{\beta-\alpha} \, dy  \\
  &\lesssim
	\sum_{k=0}^N (2^k\dist(x))^{2-\di-\beta}
	(2^{k+1}\dist(x))^{\di+(\beta-\alpha)}  \\
  &\lesssim
	\dist(x)^{2-\alpha} \sum_{k=0}^N 2^{k(2-\alpha)}
  \lesssim
	r^{2-\alpha}.
\end{align*}
Therefore, in any cases,
$\ms_\alpha$ satisfies the $\GF$-Kato condition
\eqref{eq:katocond-1}.
\end{proof}

\begin{lemma}\label{lem:finiteenergy}
Assume that $\dom$ is a bounded $\CF^{1,1}$-domain in $\R^\di$ $(\di\ge3)$.
Let $0<q<1$, let $\gamma>0$ and let $\alpha\in\R$.
Then problem
\begin{equation}\label{eq:homo-sublinear-0}
  \begin{cases}
	-\Delta u=\ms_\alpha u^q & \text{in } \dom,  \\
	u>0 & \text{in } \dom,  \\
	u=0 & \text{on } \partial\dom,
  \end{cases}
\end{equation}
has a solution $u\in\LS_{\rm loc}^q(\dom)$ with
\begin{equation}\label{eq:Dirichletint-3}
	\int_\dom \|\nabla u\|^2u^{\gamma-1} \, dx<+\infty
\end{equation}
if and only if
\[
	\alpha < \frac{2\gamma+1+q}{\gamma+1}.
\]
\end{lemma}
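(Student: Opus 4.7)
The plan is to reduce \eqref{eq:Dirichletint-3} to a pure weighted $\LS^{\gamma+q}$-integrability of $u$ against $\ms_\alpha$, then invoke Theorem \ref{thm:L^q-sol} with parameter $\gamma+q>q$, and finish by computing the sharp boundary behaviour of $\GF[\ms_\alpha]$ on a $\CF^{1,1}$-domain. The bridge is the energy identity
\[
\gamma\int_\dom |\nabla u|^2 u^{\gamma-1}\,dx = \int_\dom u^{\gamma+q}\,d\ms_\alpha,
\]
valid for any positive $u\in\LS^q_{\rm loc}(\dom)$ solving \eqref{eq:homo-sublinear-0}; formally it comes from testing the equation against $u^\gamma$, and once secured it turns \eqref{eq:Dirichletint-3} into $u\in\LS^{\gamma+q}(\dom,d\ms_\alpha)$. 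Theorem \ref{thm:L^q-sol} (with $\bv\equiv 0$, $\pms\equiv 0$) then asserts that such a continuous solution exists iff $\ms_\alpha$ satisfies the $\GF$-Kato condition and
\[
\GF[\ms_\alpha]\in\LS^{(\gamma+q)/(1-q)}(\dom,d\ms_\alpha).
\]

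Because $\dom$ is $\CF^{1,1}$ we may take $\beta=1$ in \eqref{eq:Green-est}; a direct splitting of the Green potential integral near and away from the diagonal, exactly in the spirit of the proof of Lemma \ref{lem:kato-1}, yields the sharp asymptotics $\GF[\ms_\alpha](x)\asymp\dist(x)$ for $\alpha<1$, $\GF[\ms_\alpha](x)\asymp\dist(x)\log(1/\dist(x))$ for $\alpha=1$, $\GF[\ms_\alpha](x)\asymp\dist(x)^{2-\alpha}$ for $1<\alpha<2$, and $\GF[\ms_\alpha]=+\infty$ somewhere when $\alpha\ge 2$. In the last case Lemma \ref{lem:LB-1} excludes any positive $\LS^q_{\rm loc}$-solution, so we may restrict to $\alpha<2$, where the Kato condition is free from Lemma \ref{lem:kato-1}. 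Substituting the asymptotics and treating only the nontrivial range $1<\alpha<2$, the integrability condition reduces to the finiteness of
\[
\int_\dom \dist(x)^{(2-\alpha)(\gamma+q)/(1-q)-\alpha}\,dx,
\]
i.e.\ $(2-\alpha)(\gamma+q)/(1-q)-\alpha>-1$, which by elementary algebra is $\alpha<(2\gamma+1+q)/(\gamma+1)$; since this threshold always lies in $(1,2)$, the regime $\alpha\le 1$ is automatic. This furnishes sufficiency via Theorem \ref{thm:L^q-sol} followed by the identity, while necessity follows from the identity combined with the Brezis--Kamin lower bound $u\ge (1-q)^{1/(1-q)}\GF[\ms_\alpha]^{1/(1-q)}$ of Lemma \ref{lem:LB-1}, which forces the $\LS^{(\gamma+q)/(1-q)}(\dom,d\ms_\alpha)$-condition and contradicts the above integral whenever $\alpha\in[(2\gamma+1+q)/(\gamma+1),2)$.

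The main technical obstacle is the rigorous proof of the energy identity: the weight $\ms_\alpha$ is singular at $\partial\dom$ and $u$ need not lie a priori in $\DS^{1,2}_0(\dom)$, so the natural test function $u^\gamma$ must be approximated by $u_k^\gamma\chi_\epsilon$ (with $u_k=\min\{u,k\}$ and $\chi_\epsilon$ vanishing in a neighbourhood of $\partial\dom$), and the boundary and truncation errors killed in the limit by exploiting the Brezis--Kamin upper bound $u\lesssim \GF[\ms_\alpha]^{1/(1-q)}$ from Theorem \ref{thm:upper-lower-est}.
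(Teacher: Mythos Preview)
Your route differs from the paper's. The paper does not attempt to prove the energy identity or use Theorem~\ref{thm:L^q-sol}; it simply invokes \cite[Theorem~1.1]{MR3881877}, which already characterizes the existence of an $\LS^q_{\rm loc}$-solution satisfying \eqref{eq:Dirichletint-3} by the condition $\GF[\ms_\alpha]\in\LS^{(\gamma+q)/(1-q)}(\dom,d\ms_\alpha)$. After that citation, the paper only computes the two-sided asymptotics of $\GF[\ms_\alpha]$ via a dyadic decomposition (as in Lemma~\ref{lem:kato-1}) and checks when the weighted integral is finite. Your asymptotic computation and integrability threshold agree with this part.

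Where your proposal has a genuine gap is the reduction step. The ``energy identity''
\[
\gamma\int_\dom |\nabla u|^2 u^{\gamma-1}\,dx=\int_\dom u^{\gamma+q}\,d\ms_\alpha
\]
is essentially the content of \cite{MR3881877}; it is not a routine integration by parts, precisely because $\ms_\alpha$ blows up at $\partial\dom$ and $u$ is a priori only in $\LS^q_{\rm loc}(\dom)$. Your justification sketch rests on the bound ``$u\lesssim\GF[\ms_\alpha]^{1/(1-q)}$ from Theorem~\ref{thm:upper-lower-est}'', but that theorem gives only the \emph{linear} upper bound $u\le\conref{c:ub}^q\,\GF[\ms_\alpha]$, which is strictly weaker near $\partial\dom$ (since $\GF[\ms_\alpha]\to 0$ there and $1/(1-q)>1$). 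With the correct linear bound one finds $u^{\gamma+q}\dist^{-\alpha}\lesssim\dist^{(2-\alpha)(\gamma+q)-\alpha}$, and this exponent exceeds $-1$ only for $\alpha<\frac{2(\gamma+q)+1}{\gamma+q+1}$, which is \emph{strictly smaller} than the true threshold $\frac{2\gamma+1+q}{\gamma+1}$; so your truncation/boundary-error argument, as written, cannot close for $\alpha$ in the intermediate range. For the necessity direction the difficulty is worse: you must establish the identity (or at least the inequality $\int u^{\gamma+q}d\ms_\alpha\le\gamma\int|\nabla u|^2 u^{\gamma-1}dx$) for an arbitrary $\LS^q_{\rm loc}$-solution, where Theorem~\ref{thm:upper-lower-est} does not even apply since $u$ need not be bounded. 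Finally, note the class mismatch: Theorem~\ref{thm:L^q-sol} produces solutions in $\CF(\cl)$, whereas Lemma~\ref{lem:finiteenergy} concerns the broader class $\LS^q_{\rm loc}(\dom)$; this is harmless for sufficiency but means Theorem~\ref{thm:L^q-sol} cannot be quoted for necessity (you correctly fall back on Lemma~\ref{lem:LB-1} there, but still need the identity).
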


\begin{proof}
In light of \cite[Theorem 1.1]{MR3881877},
it suffices to show that
\[
	\int_\dom \GF[\ms_\alpha](x)^{\frac{\gamma+q}{1+q}} \, d\ms_\alpha(x)
  < +\infty
  \quad \Longleftrightarrow \quad
	\alpha < \frac{2\gamma+1+q}{\gamma+1}.
\]

($\Longrightarrow$)\:
Suppose to the contrary that $\alpha\ge(2\gamma+1+q)/(\gamma+1)$.
Then
\[
	(2-\alpha)\frac{\gamma+q}{1-q}-\alpha \le -1.
\]
Since
\[
	\GF[\ms_\alpha](x)
  \gtrsim
	\int_{\dom\cap B(x,\dist(x)/2)} \|x-y\|^{2-\di} \dist(y)^{-\alpha} \, dy
  \gtrsim
	\dist(x)^{2-\alpha}
\]
for all $x\in\dom$, we have
\begin{align*}
	\int_\dom \GF[\ms_\alpha](x)^{\frac{\gamma+q}{1-q}} \, d\ms_\alpha(x)
  \gtrsim
	\int_\dom \dist(x)^{(2-\alpha)\frac{\gamma+q}{1-q}-\alpha} \, dx
  = + \infty.
\end{align*}

($\Longleftarrow$)\:
Note that $\alpha<2$.
Let $x\in\dom$.
We split $\GF[\ms_\alpha](x)=I_1+I_2$, where
\begin{align*}
	I_1 &:= \int_{B(x,\dist(x)/2)} \GF(x,y)\,d\ms_\alpha(y),  \\
	I_2 &:= \int_{\dom\setminus B(x,\dist(x)/2)} \GF(x,y)\,d\ms_\alpha(y).
\end{align*}
It is easy to see that $I_1\lesssim \dist(x)^{2-\alpha}$.
Also, in a manner similar to the proof of Lemma \ref{lem:kato-1},
we can estimate $I_2$ as follows.
Let $N$ be the smallest natural number such that $\diam\dom\le2^N\dist(x)$.
Then, by \eqref{eq:Green-est} with $\beta=1$,
\begin{align*}
	I_2
  &\lesssim
	\sum_{k=0}^N (2^k\dist(x))^{-\di}\dist(x)
	\int_{B(x,2^k\dist(x))}\dist(y)^{1-\alpha} \, dy  \\
  &\lesssim
	\dist(x)^{2-\alpha} \sum_{k=0}^N 2^{k(1-\alpha)}  \\
  &\lesssim
	F_\alpha(x):=
  \begin{cases}
	\dist(x)^{2-\alpha} & \text{if } \alpha>1,  \\
	\dist(x)^{2-\alpha}\log\dfrac{2\diam\dom}{\dist(x)} & \text{if } \alpha=1, \\
	\dist(x) & \text{if } \alpha<1.
  \end{cases}
\end{align*}
Therefore $\GF[\ms_\alpha](x)\lesssim F_\alpha(x)$, and so
\[
	\int_\dom \GF[\ms_\alpha](x)^{\frac{\gamma+q}{1+q}} \, d\ms_\alpha(x)
  \lesssim
	\int_\dom F_\alpha(x)^{\frac{\gamma+q}{1+q}} \dist(x)^{-\alpha} \, dx
  < +\infty.
\]
Thus the lemma is proved.
\end{proof}

\begin{corollary}\label{exm:kato-2}
Assume that $\dom$ is a bounded $\CF^{1,1}$-domain in $\R^\di$ $(\di\ge3)$.
Let $0<q<1$, let $\gamma>0$ and let
\[
	\frac{2\gamma+1+q}{\gamma+1} \le \alpha<2.
\]
Then \eqref{eq:homo-sublinear-0}
has a solution $u\in\CF^1(\dom)\cap\CF(\overline{\dom})$,
but fails to have a solution
$u\in\LS_{\rm loc}^q(\dom)$ with \eqref{eq:Dirichletint-3}.
\end{corollary}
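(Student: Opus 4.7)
The statement splits into a positive part (existence of a $\CF^1(\dom)\cap\CF(\overline{\dom})$-solution) and a negative part (non-existence of any $\LS^q_{\rm loc}(\dom)$-solution with finite weighted Dirichlet integral); both are essentially immediate consequences of the preceding lemmas in this section, so the plan is to assemble them rather than argue from scratch.

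For the existence, the plan is to apply Theorem \ref{thm:existence} with $\bv\equiv 0$ and $\pms\equiv 0$. Because $\dom$ is $\CF^{1,1}$, estimate \eqref{eq:Green-est} holds with $\beta=1$, and the hypothesis $\alpha<2=1+\beta$ is exactly the range in which Lemma \ref{lem:kato-1} guarantees \eqref{eq:katocond-1}; condition \eqref{eq:katocond-2} is vacuous because $\dom$ is bounded. Since $\ms_\alpha$ is a nontrivial positive measure we have $\GF[\ms_\alpha]\not\equiv 0$, so \eqref{eq:cond-f-meas} is satisfied, and Theorem \ref{thm:existence} produces a positive solution $u\in\CF(\overline{\dom})$ of \eqref{eq:homo-sublinear-0}. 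To promote this to $\CF^1(\dom)$, I would observe that on any compact set $K\Subset\dom$ the distance $\dist$ is bounded below, so the coefficient $\dist^{-\alpha}$ is bounded on $K$, and hence $-\Delta u=\dist^{-\alpha}u^q\in\LS^\infty_{\rm loc}(\dom)$. Classical interior elliptic regularity (local $\Sob^{2,p}$-bounds for all $p<\infty$ followed by Sobolev embedding) then yields $u\in\CF^{1,\beta'}_{\rm loc}(\dom)$ for every $\beta'<1$, in particular $u\in\CF^1(\dom)$.

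For the non-existence part, the plan is to invoke Lemma \ref{lem:finiteenergy} directly. That lemma characterizes the existence of a solution $u\in\LS^q_{\rm loc}(\dom)$ of \eqref{eq:homo-sublinear-0} with $\int_\dom\|\nabla u\|^2 u^{\gamma-1}\,dx<+\infty$ by the strict inequality $\alpha<(2\gamma+1+q)/(\gamma+1)$. The hypothesis $\alpha\ge(2\gamma+1+q)/(\gamma+1)$ negates precisely this inequality, so no such solution can exist, completing the proof.

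Neither step poses a genuine obstacle: the content is already packaged in Lemmas \ref{lem:kato-1} and \ref{lem:finiteenergy} together with Theorem \ref{thm:existence}, and the only technical point is the $\CF^1$-upgrade, which is a routine application of interior elliptic regularity once one notices that the degeneration of $\ms_\alpha$ is purely a boundary phenomenon.
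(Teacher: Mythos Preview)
Your proposal is correct and follows essentially the same route as the paper: existence via Lemma~\ref{lem:kato-1} (with $\beta=1$ since $\dom$ is $\CF^{1,1}$) and Theorem~\ref{thm:existence}, the $\CF^1$-upgrade from $\ms_\alpha u^q\in\LS^\infty_{\rm loc}(\dom)$ by interior elliptic regularity, and non-existence directly from Lemma~\ref{lem:finiteenergy}. You supply more detail in verifying the hypotheses of Theorem~\ref{thm:existence} and in the regularity step, but the structure is identical.
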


\begin{proof}
By Lemma \ref{lem:kato-1} and Theorem \ref{thm:existence},
there exists a solution $u\in\CF(\overline{\dom})$ of \eqref{eq:homo-sublinear-0}.
Since $\ms_\alpha u^q\in\LS_{\rm loc}^\infty(\dom)$,
we observe that $u\in\CF^1(\dom)$.
The second assertion follows from Lemma \ref{lem:finiteenergy}.
\end{proof}

\section*{Acknowledgements}
Part of this work was written while the second named author was at the Department of Mathematics, Hokkaido University. 
He would like to express his  gratitude to this institution for the hospitality and support provided.

\end{document}